\newtheorem{thm}{Theorem}[section]
\newtheorem{lem}[thm]{Lemma}
\newtheorem{cor}[thm]{Corollary}
\newtheorem{gronwall}[thm]{Grönwall's Inequality}
\theoremstyle{definition}
\theoremstyle{remark}
\newtheorem{remark}[thm]{Remark}
\numberwithin{equation}{section}
\newcommand{\R}{\mathbb{R}}  
\newcommand{\N}{\mathbb{N}}
\newcommand{\Lcurv}{\mathcal{L}}
\newcommand{\norm}[1]{\left\lVert#1\right\rVert}
\newenvironment{indentpar}[1]%
 {\begin{list}{}%
         {\setlength{\leftmargin}{#1}}%
         \item[]%
 }
 {\end{list}}
\begin{document}

\title[Scalar Curvature Rigidity, Regularity, $L^p$-Bounds, Decay Rates]{Scalar Curvature Rigidity and Ricci DeTurck Flow on Perturbations of Euclidean Space}
\author{Alexander Appleton}
\address{Department of Mathematics, UC Berkeley, 
CA 94720, USA}
\email{aja44@berkeley.edu}
\maketitle

\begin{abstract}
We prove a rigidity result for non-negative scalar curvature perturbations of the Euclidean metric on $\R^n$, which may be regarded as a weak version of the rigidity statement of the positive mass theorem. We prove our result by analyzing long time solutions of Ricci DeTurck flow. As a byproduct in doing so, we extend known $L^p$ bounds and decay rates for Ricci DeTurck flow and prove regularity of the flow at the initial data.
\end{abstract}

\tableofcontents

\section{Introduction}
In this paper we will prove the following rigidity theorem for non-negative scalar curvature metrics on $\R^n$:
\begin{thm}
\label{scalar-rigidity-thm}
There exists an $\epsilon(n)>0$ such that any smooth metric $g$ on $\R^n$, $n\geq2$ which satisfies
\begin{enumerate} 
\item $R_g \geq 0 $
\item $\norm{g-g_{eucl}}_{L^{\infty}(\R^n)} < \epsilon$
\item $\norm{g-g_{eucl}}_{L^p(\R^n)} < \infty$ for some $1 \leq p < \frac{n}{n-2}$
\end{enumerate}
is isometric to the standard Euclidean metric $g_{eucl}$ on $\R^n$.
\end{thm}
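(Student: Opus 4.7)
The plan is to solve the Ricci DeTurck flow (RDTF) with Euclidean background starting from $g$, use the long-time existence, $L^p$-bound, decay, and initial-regularity results developed earlier in the paper to produce a smooth solution $g(t)$ on $[0,\infty)\times\R^n$ converging to $g_{eucl}$, and then apply a Hamilton-style supremum inequality to the scalar curvature to force $R \equiv 0$ throughout the flow. Once $R$, and hence $\mathrm{Ric}$, vanish, RDTF reduces to pure diffeomorphism evolution, and convergence to $g_{eucl}$ identifies $g$ with the Euclidean metric.

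Concretely, the first step is to run
\[
\partial_t g = -2\,\mathrm{Ric}(g) + \mathcal{L}_W g, \qquad W^k = g^{ij}\bigl(\Gamma^k_{ij}(g) - \Gamma^k_{ij}(g_{eucl})\bigr), \qquad g(0)=g.
\]
Hypotheses (ii) and (iii), combined with the preceding theorems, should yield a smooth complete solution $g(t)$ staying uniformly $\epsilon$-close to $g_{eucl}$ in $L^\infty$, remaining controlled in $L^p$, and decaying to $g_{eucl}$ in appropriate norms as $t \to \infty$.

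The second step exploits the scalar-curvature evolution under RDTF,
\[
\partial_t R = \Delta_{g(t)} R + W^k \partial_k R + 2|\mathrm{Ric}|^2_{g(t)},
\]
together with hypothesis (i), which is preserved by the maximum principle. At a spatial maximum of $R$ one has $\nabla R = 0$, $\Delta R \leq 0$, and $|\mathrm{Ric}|^2 \geq R^2/n$, yielding the Riccati-type bound $\tfrac{d}{dt}\sup R \geq \tfrac{2}{n}(\sup R)^2$. This would blow up $\sup R$ in finite time whenever $\sup R > 0$, contradicting long-time existence; hence $R \equiv 0$ on the whole flow. Differentiating the evolution equation then forces $|\mathrm{Ric}|^2 \equiv 0$, so RDTF collapses to $\partial_t g = \mathcal{L}_W g$, i.e., pure evolution by pullback along the flow of $W$. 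Combined with $g(t)\to g_{eucl}$, this exhibits $g$ in the diffeomorphism orbit of $g_{eucl}$, and hence as isometric to it.

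I expect the main obstacle to be making the supremum argument rigorous on the non-compact manifold $\R^n$, where $\sup R(\cdot,t)$ need not be attained. The standard remedy combines decay of $R$ at infinity --- available from the $L^p$ bound together with parabolic smoothing --- with an Omori--Yau or cutoff-plus-barrier maximum principle, producing an approximate maximizing sequence on which the gradient and negative-Laplacian controls hold in the limit. A secondary technical issue is controlling the time-dependent diffeomorphisms generated by $W$ and ensuring they admit a limit (or at least a convergent subsequence) as $t\to\infty$, which should follow from the decay rates on $W$ inherited from the decay of $g(t)-g_{eucl}$.
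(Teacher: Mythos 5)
Your Hamilton-style supremum argument has a fatal sign error. At a spatial \emph{maximum} of $R$ you correctly note $\Delta R \le 0$, but then the evolution equation $\partial_t R = \Delta_g R + W\cdot\nabla R + 2|\mathrm{Ric}|^2$ gives, at that point,
\[
\partial_t R \;=\; \Delta R + 2|\mathrm{Ric}|^2,
\]
where the first summand is $\le 0$ and the second is $\ge 0$, so you cannot conclude $\partial_t R \ge \tfrac{2}{n}R^2$. The classical Hamilton Riccati estimate applies at a spatial \emph{minimum} of $R$, where $\Delta R \ge 0$ and one obtains $\tfrac{d}{dt}\inf R \ge \tfrac{2}{n}(\inf R)^2$. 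That is the version that forces finite-time blow-up on a closed manifold with $\min R > 0$. On $\R^n$ with $g$ an $L^p$-small perturbation of $g_{eucl}$ the scalar curvature decays to zero at infinity, so $\inf R = 0$ and the Hamilton inequality is vacuous: it merely re-asserts that nonnegative scalar curvature is preserved. Replacing it by an Omori--Yau sequence cannot repair this, because the issue is the sign of $\Delta R$ at the extremum, not the non-attainment of the supremum.

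The paper sidesteps this obstruction by a quantitative comparison of decay rates. From the $L^p$-preservation and analytic smoothing estimates one has $|R(g_t)| = O(t^{-\frac{n}{2p}-1})$. Separately, fixing a time $t_0>0$ and passing to the associated Ricci flow, one establishes (via Perelman's $\mathcal{L}$-geometry and the resulting lower heat-kernel bound on the evolving background) that if $R(x_0,t_0)>0$ at a single point, then $R(x_0,t_0+t)\ge C t^{-n/2}$ for $t$ large. Since $\frac{n}{2p}+1>\frac{n}{2}$ precisely when $p<\frac{n}{n-2}$, these two rates are incompatible, forcing $R\equiv 0$, then $\mathrm{Ric}\equiv 0$, and finally flatness of the initial metric. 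This ``fast decay vs.\ slow decay'' dichotomy is the missing idea in your proposal; a pointwise ODE comparison at an extremum of $R$ is too weak on noncompact $\R^n$ where the relevant extremum is the infimum and it is identically zero.
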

It is unclear what happens in the borderline case $p=\frac{n}{n-2}$, for which we only have a partial result proven in lemma \ref{borderline}.

Theorem \ref{scalar-rigidity-thm} is related to the rigidity statement of the positive mass theorem proven by Schoen and Yau (\cite{SY}, \cite{SY2}) using minimal surface techniques. They showed that the ADM-mass 
\begin{equation}
m(g) = \lim_{r\rightarrow\infty} \int_{S_r} \left(\partial_j g_{ij} - \partial_i g_{jj}\right) \, \mathrm{d}A^i
\end{equation}
of an asymptotically Euclidean non-negative scalar curvature manifold (see \cite{ADM} and \cite{Bartnik}) of dimension $n\leq7$ is non-negative and that the manifold is flat if $m(g) =0$. The dimension $n = 8$ case can be handled by the perturbation result of Smale \cite{Sma93}. Recently, Schoen and Yau \cite{SY17} have submitted a proof generalizing the minimizing hypersurface technique to all dimensions.

These results are consistent with theorem \ref{scalar-rigidity-thm}, since for metrics satisfying 
\begin{equation}
\norm{g-g_{eucl}}_{L^p(\R^n)}<\infty
\end{equation}
we heuristically have
\begin{align*}
|g-g_{eucl}| &\sim  o\left(\frac{1}{r^{\frac{n}{p}}}\right)\\
\partial g &\sim o\left(\frac{1}{r^{\frac{n}{p} -1 }}\right)
\end{align*}
and 
\begin{equation*}
\int_{S_r} \partial_j g_{ij} - \partial_i g_{jj} \, \mathrm{d}A^i \sim o\left(r^{n-2 - \frac{n}{p}}\right).
\end{equation*}
Hence for $p < \frac{n}{n-2}$ we expect $m(g)=0$ and thus $g$ to be flat by the positive mass theorem.

We will prove theorem \ref{scalar-rigidity-thm} by evolving the metric via the Ricci DeTurck flow, which is a geometric flow related to the Ricci flow by a time-dependent family of diffeomorphisms. Its equation can be written as
\begin{equation}
\label{RdT-schematic1}
\partial_t g_t = -2Ric(g_t) - \Lcurv_{X_t} (g_t), \qquad t>0,
\end{equation} 
where $\Lcurv_{X_t}(g_t)$ is the Lie derivative of the metric $g_t$ with respect to a time varying vector field $X_t$. As we will be studying the evolution of perturbations of the Euclidean metric, it is useful to consider the quantity $h_t = g_t - g_{eucl}$. The Ricci DeTurck equation with respect to the fixed Euclidean background metric $g_{eucl}$ on $\R^n$ then takes the form 
\begin{equation}
\label{RdT-schematic2}
(\partial_t - \Delta) h_t = Q(h_t, \nabla h_t , \nabla^2 h_t)
\end{equation}
where 
\begin{align}
Q(h_t, \nabla h_t , \nabla^2 h_t) &= (g_{eucl} + h_t)^{-1}\ast(g_{eucl} + h_t)^{-1}\ast\nabla h_t \ast \nabla h_t  \\
 &\qquad\qquad\qquad\qquad+ \nabla\ast \left( \left((g_{eucl}+h_t)^{-1}- g_{eucl}^{-1}\right) \ast \nabla h\right)
\end{align}

Equation (\ref{RdT-schematic2}) is strongly parabolic, however due to its non-linearity and the non-compactness of the domain, a short time solution does not exist a priori and if it does, we expect singularities to develop after finite time precluding long time solutions. Surprisingly, it turns out that for $L^{\infty}$-small initial data a long time solution to the flow exists. 

Schnürer, Schulze and Simon \cite{MilesSimon} were the first to show long time existence to the Ricci DeTurck equation (\ref{RdT-schematic2}) for $C^0$ perturbations of the Euclidean metric, which satisfy certain $L^{\infty}$-decay conditions at infinity. They also proved that perturbations bounded in $L^p$ for $p\geq 2$ remain so at later times. Using these results and an interpolation inequality, they obtain $L^{\infty}$-decay rates of $h_t$. We will extend their results to $1\leq p < 2$ and slightly improve their decay rates using a limit argument (see our theorem \ref{main-thm}, (ii) \& (iii)). 

Koch and Lamm in \cite{KochAndLamm} were able to remove the decay conditions at infinity by constructing a weak solution and proving analyticity of the solution on $\R^n\times(0,\infty)$. We will make extensive use of the results of Koch and Lamm and extend them, by showing that for smooth initial data the weak solution they constructed is classical, e.g. the solution and its derivatives are continuous up to $t=0$. In particular we prove the following theorem:

\begin{thm}
\label{main-thm}
There exists an $\epsilon=\epsilon(n)>0$ and $C=C(n)>0$ such that for smooth initial data $h_0$ satisfying $\norm{h_0}_{L^{\infty}(\R^n)}< \epsilon$ we have 
\begin{enumerate}[(i)]
\item A smooth classical solution $h_t \in C^{\infty}(\R^n \times [0,\infty))$ to the Ricci DeTurck equation (\ref{RdT-schematic2}) exists. 
\item If $\norm{h_0}_{L^p(\R^n)}<\infty$ for some $1\leq p \leq \infty$, then $\norm{h_t}_{L^{p}(\R^n)} \leq C \norm{h_0}_{L^{p}(\R^n)}$ for all $t\geq0$. 
\item If $\norm{h_0}_{L^p(\R^n)}<\infty$ for some $1\leq p < \infty$, then the following decay rate holds for $t>0$
\begin{equation}
\sup_{x\in \R^n}|h_t(x)| \leq \frac{C}{t^{\frac{n}{2p}}}\norm{h_0}_{L^p(\R^n)}.
\end{equation}
For higher derivatives we have that there exists a $R=R(n)>0$ such that for all $k \in \N_0$ and multi-indices $\alpha \in \N^n_{0}$ we have
\begin{equation}
\label{decay-rates}
\sup_{x\in \R^n} |(t^{\frac{1}{2}}\nabla)^{\alpha}(t\partial_t)^k h_t(x)| \leq \frac{C R^{|\alpha|+k}(|\alpha| + k)!}{t^{\frac{n}{2p}}} \norm{h_0}_{L^p(\R^n)}. 
\end{equation}
\end{enumerate}
\end{thm}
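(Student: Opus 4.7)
The plan is to bootstrap the Koch--Lamm weak solution, which is already analytic on $\R^n \times (0, \infty)$ with Cauchy-type derivative estimates of the form $\norm{(t^{1/2}\nabla)^\alpha h_t}_{L^\infty} \leq C R^{|\alpha|} |\alpha|! \norm{h_0}_{L^\infty}$, into a classical solution with the stated $L^p$ bounds and decay rates. I would prove (i) first, then use it together with a limiting procedure to extend the $L^p$ bounds of Schn\"urer--Schulze--Simon to $p < 2$ in (ii), and finally deduce the decay rates in (iii) from (ii) combined with heat-semigroup smoothing and the spatial analyticity bounds.

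For (i), smoothness up to $t = 0$, I would combine standard short-time classical existence for the quasilinear system (\ref{RdT-schematic2}) with smooth initial data, obtained via Schauder theory, with uniqueness of the Koch--Lamm weak solution. On the interval where both solutions exist they coincide, so the weak solution is classical and smooth up to $t=0$ there, and the Koch--Lamm analyticity on $(0, \infty)$ then propagates this regularity globally. A more direct alternative is to work from the Duhamel equation
\begin{equation*}
h_t = e^{t \Delta} h_0 + \int_0^t e^{(t-s)\Delta} Q(h_s, \nabla h_s, \nabla^2 h_s) \, ds,
\end{equation*}
and show that for smooth $h_0$ the integrand extends continuously to $s = 0$, so that each space-time derivative of $h_t$ converges to the corresponding derivative of $h_0$ as $t \to 0^+$.

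For (ii), the case $p \in [2, \infty]$ already follows from Schn\"urer--Schulze--Simon. To reach $1 \leq p < 2$, my plan is to use a limit argument: approximate $h_0 \in L^p \cap L^\infty$ by truncations or mollifications lying in $L^p \cap L^2$, apply the known bounds to each regularized problem, and pass to the limit using the Koch--Lamm derivative estimates to secure the required uniform compactness. Equivalently one can estimate $\norm{h_t}_{L^p}$ directly via Duhamel: the heat semigroup is contractive on every $L^p$, and the nonlinearity $Q$, schematically $\nabla h \ast \nabla h + h \ast \nabla^2 h$, is bounded in $L^p$ by terms of the form $\norm{\nabla h_s}_{L^\infty}\norm{\nabla h_s}_{L^p} + \norm{h_s}_{L^\infty}\norm{\nabla^2 h_s}_{L^p}$, which can be handled via the Cauchy derivative estimates and closed with a Gr\"onwall argument. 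For (iii), the $L^\infty$ decay $\sup_x |h_t(x)| \leq C t^{-n/(2p)} \norm{h_0}_{L^p}$ follows from the standard $L^p \to L^\infty$ smoothing bound for $e^{t\Delta}$ applied to the linear part of Duhamel's formula, with the nonlinear part estimated via (ii) and the derivative bounds. The higher-derivative estimate (\ref{decay-rates}) is then obtained by applying the analyticity estimates of Koch--Lamm on $[t/2, t]$, using $h_{t/2}$ as a new initial datum and the freshly-established $L^\infty$ decay of $h_{t/2}$ to absorb the factor $t^{-n/(2p)}$.

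The main obstacle I expect is the $L^p$ bound for $1 \leq p < 2$: the standard energy technique with multiplier $|h|^{p-2} h$ either fails or requires careful $\epsilon$-regularization because of the singularity at $h=0$, while naive interpolation from $L^2$ and $L^\infty$ yields a constant depending on $\norm{h_0}_{L^2}$ and $\norm{h_0}_{L^\infty}$ rather than the desired $\norm{h_0}_{L^p}$. The limit argument therefore has to be engineered so that the approximating data converges to $h_0$ in $L^p$ while remaining $L^\infty$-small, and so that the $L^p$ bound persists in the limit independently of the approximation parameter.
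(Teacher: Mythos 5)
The high-level plan — bootstrap the Koch--Lamm weak solution, establish $L^p$ bounds for all $p\geq1$, then iterate derivative estimates starting at time $t/2$ for the decay rates — matches the paper's structure, and part (iii) in particular is essentially the paper's argument (interpolate, then reapply the Koch--Lamm Cauchy estimates starting from $h_{t/2}$). However, there are two genuine gaps.

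For (i), neither of your alternatives goes through as stated. The initial data $h_0$ is only assumed smooth and $L^\infty$-small; its derivatives and the resulting curvature of $g_0=g_{eucl}+h_0$ are not assumed bounded, so ``standard short-time classical existence via Schauder theory'' on the noncompact domain $\R^n$ does not directly apply, nor does Shi's theorem. Your Duhamel alternative is circular: to show the integrand $e^{(t-s)\Delta}Q(h_s,\nabla h_s,\nabla^2 h_s)$ extends continuously to $s=0$ you would need a priori bounds on $\nabla^2 h_s$ as $s\to 0$, and the only a priori control from Koch--Lamm blows up like $s^{-1}$. The paper instead approximates $h_0$ by compactly supported smooth data (for which Shi's theorem gives a genuine classical solution, identified with the weak one by uniqueness), proves a uniform local \emph{boundary} H\"older estimate (Lemma \ref{localisation-estimate}, a boundary analogue of Bamler's interior estimate obtained by iterating on dilated neighborhoods), and passes to the limit via Arzel\`a--Ascoli and a diagonal argument. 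This boundary estimate is the missing ingredient in your proposal.

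For (ii) with $1\leq p<2$, you correctly flag the difficulty but do not supply the resolution, and you partly misidentify the obstruction. The $\delta$-regularization $|h|_\delta=\sqrt{|h|^2+\delta}$ handles the singularity of $|h|^{p-2}h$ at $h=0$ uniformly; the real problem, as the paper notes, is that the term $p(p-2)|h|_\delta^{p-4}\langle\nabla_a h,h\rangle\langle\nabla_a h,h\rangle$ arising from the $Q_1$ integration by parts has the wrong sign for $p<2$, leaving an uncontrolled $\int\eta_R|\nabla h|^2$ in the local energy inequality. Your proposed limit argument cannot recover an $L^p$ bound from the $L^2$ bounds of the approximants, and your Duhamel/Gr\"onwall alternative requires $\norm{\nabla h_s}_{L^p}$ and $\norm{\nabla^2 h_s}_{L^p}$, which are not available a priori. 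The paper's key idea, absent from your proposal, is to first establish the global space-time bound $\int_0^\infty\int_{\R^n}|\nabla h|^2\,\mathrm{d}x\,\mathrm{d}t\lesssim\norm{h_0}_{L^p}^p$ via the $p=2$ energy identity, and then feed this directly into the $p<2$ local estimate to absorb the offending $\int\eta_R|\nabla h|^2$ term before applying Gr\"onwall.
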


The setup of our paper is as follows: We begin by proving that for sufficiently regular initial data a classical solution to the Ricci DeTurck equation exists. Then we show that initial data bounded in $L^p(\R^n)$, for some $1\leq p < \infty$, remains uniformly bounded in $L^p(\R^n)$ at later times. This will allow us to prove $L^{\infty}$ decay rates of the evolving metric and its derivatives, which are as one would expect by comparison with the standard heat equation on $\R^n$.

Finally, we use the decay rates (\ref{decay-rates}) on the metric and its derivatives to prove the scalar rigidity theorem \ref{scalar-rigidity-thm} above. The scalar curvature evolving under Ricci DeTurck flow satisfies the following super heat equation
\begin{equation}
(\partial_t - \Delta + \Lcurv_{X_t}) R(g_t) = 2 |Ric(g_t)|^2.
\end{equation}
As it is the case for bounded solutions to the standard heat equation on $\R^n$, we show that the scalar curvature cannot decay at a rate faster than $O(t^{-\frac{n}{2}})$. However in coordinates $R(g_t)$ can be written as
\begin{equation}
\label{scalar-curvature-coord}
R(g_t) = g_t^{-1}\ast g_t^{-1}\ast \nabla^2 h_t + g_t^{-1}\ast g_t^{-1}\ast g_t^{-1} \ast \nabla h_t \ast \nabla h_t
\end{equation}
and hence by the decay rates (\ref{decay-rates})
\begin{equation}
|R(g_t)| =O(t^{-\frac{n}{2p}-1}).
\end{equation}
Therefore any presence of scalar curvature in the case $p < \frac{n}{n-2}$ leads to a contradition, yielding a proof of theorem \ref{scalar-rigidity-thm}. In the borderline case $p=\frac{n}{n-2}$ we show that the $L^1$ norm of the scalar curvature $R(g_t)$ becomes instantly bounded at times $t>0$ (see lemma \ref{borderline}).

\section{Notation}
If not specified otherwise, we will take norms and derivatives with respect to the fixed Euclidean background metric $g_{eucl}$, e.g. $\nabla$ will denote the covariant derivative with respect to $g_{eucl}$.

For two metrics $g$ and $\overline{g}$ we say $g \leq \overline{g}$ if for all $\xi \in \R^n$ we have 
$$ g_{ij} \xi^i \xi^j \leq \overline{g}_{ij} \xi^i \xi^j.$$

We define a standard cut-off function $\eta$ on $\R^n$, such that $\eta, \nabla \sqrt{\eta} \in C^{\infty}_{c}(B_2(0))$, $0 \leq \eta \leq 1$, $\eta \equiv 1$ on $B_{1}(0)$ and $\norm{\nabla \eta} \leq c$ for some universal $c$. We also define a rescaled cut-off function $\eta_R(y) = \eta(\frac{y}{R})$ for $R>0$. By $\eta_{R,x}$ we will denote the rescaled cut-off function centered at $x$, defined by $\eta_{R,x} (y) = \eta (\frac{y-x}{R})$.
\section{Preliminaries}

\subsection{Ricci DeTurck flow}
Let $M$ be a manifolds with a fixed background metric $\overline{g}$. Then a family of metrics $(g_t)_{t \in I}$ is a solution of the Ricci DeTurck flow if it satisfies the following evolution equation
\begin{equation}
\label{RicciDeTurck}
\partial_t g_t = -2 \text{Ric}_{g_t} - \mathcal{L}_{X_{\overline{g}}(g_t-\overline{g})}(g_t),
\end{equation}
where $X_{\overline{g}} (h)$ is the Bianchi operator defined on symmetric 2-forms $h$ by
\begin{equation}
\label{X-eqn}
X_{\overline{g}}^i(h) = (\overline{g} + h)^{ij} (\overline{g} + h)^{pq} \left( - \nabla^{\overline{g}}_p h_{qj} + \frac{1}{2} \nabla^{\overline{g}}_j h_{pq}\right) \\
\end{equation}
and the covariant derivatives are taken with respect to $\overline{g}$. This equation is strongly parabolic, allowing standard parabolic theory to be applied to show short time existence on closed manifolds. We will only be considering Ricci DeTurck flow on $\R^n$ with the standard Euclidean metric $\overline{g} = g_{eucl}$ as the fixed background metric. Then the Ricci DeTurck equation takes the form (see \cite[Lemma 2.1]{Shi89} )
\begin{align}
\label{RdT}
\partial_t g_{ij} &= g^{ab} \nabla_a \nabla_b g_{ij}  \\
									& \quad + \frac{1}{2} g^{ab} g^{pq} \big( \nabla_i g_{pa} \nabla_j g_{qb} + 2 \nabla_a g_{jp} \nabla_q g_{ib} - 2\nabla_a g_{jp} \nabla_b g_{iq}\nonumber \\ 
									& \qquad \qquad \quad-2\nabla_j g_{pa} \nabla_b g_{iq} - 2 \nabla_i g_{pa} \nabla_b g_{jq} \big). \nonumber
\end{align}
It will be useful to consider the difference $h:= g_t - g_{eucl}$. Using the above equation we can express the evolution of $h$ as (see \cite[Equation (4.4)]{KochAndLamm}):
\begin{equation}
\label{RdT-simplified}
(\partial_t - \Delta)h = Q_0[h] + \nabla Q_1[h],
\end{equation}
where 
\begin{align}
\label{Q0}
Q_0[h] &= \frac{1}{2}(\delta + h)^{ab}(\delta + h)^{pq}\Big(\nabla_i h_{pa} \nabla_j h_{qb} + 2 \nabla_a h_{jp} \nabla_q h_{ib} - 2\nabla_a h_{jp} \nabla_b h_{iq} \\ \nonumber
&- 2\nabla_j h_{pa} \nabla_b h_{iq} - 2\nabla_i h_{pa} \nabla_b h_{jq}\Big) - \nabla_a(\delta +h)^{ab} \nabla_b h_{ij} 
\end{align}
and
\begin{equation}
\label{Q1}
\nabla Q_1[h] = \nabla_a\left( \left(\delta + h\right)^{ab}- \delta^{ab})\nabla_b h_{ij}\right).
\end{equation}

\subsection{Long time existence of Ricci DeTurck flow}
\label{section-KL}
In \cite[Theorem 4.3]{KochAndLamm} Koch and Lamm prove the existence of a long time weak solution to the Ricci DeTurck equation (\ref{RdT-simplified}). We will give a brief outline of their proof here. Using Duhamel's Principle the Ricci DeTurck equation (\ref{RdT-simplified}) with initial data $h_0$ can be written in the integral form
\begin{equation}
\label{weak-RdT}
h = F[h, h_0] = S[h_0] + V[h],
\end{equation}
where
\begin{align}
S[h_0](x,t) &= \int_{\R^n} K(x,t;y,0)h_0(y)\, \mathrm{d}y \\
V[h](x,t) &= \int_0^t\int_{\R^n} K(x,t;y,s)Q_0[h] + \nabla_x K(x,t;y,s) Q_1[h] \, \mathrm{d}y \, \mathrm{d}s
\end{align}
and 
\begin{equation}
K(x,t;y,s) = (4\pi (t-s))^{-\frac{n}{2}} \exp\left( - \frac{|x-y|^2}{4(t-s)}\right)
\end{equation} 
is the kernel of the linear heat equation $\partial_t u = \Delta u$ on $\R^n$. Koch and Lamm construct the Banach spaces $X_T$, $0 < T \leq \infty$, defined by
\begin{equation}
X_T = \{ h \Big| \norm{h}_{X_T} < \infty \}
\end{equation}
and
\begin{align}
\norm{h}_{X_T} &= \sup_{0<t<T} \norm{h(t)}_{L^{\infty}(\R^n)} \\ \nonumber
		& \quad + \sup_{x\in\R^n} \sup_{0<R^2<T} \left( R^{-\frac{n}{2}} \norm{\nabla h}_{L^2(B_R(x)\times(0,R^2))} + R^{\frac{2}{n+4}} \norm{\nabla h}_{L^{n+4}(B_R(x)\times(\frac{R^2}{2}, R^2))}\right).
\end{align}
They show that there exist constants $\epsilon = \epsilon(n)>0$ and $C=C(n)>0$ such that whenever
\begin{equation}
\norm{h_0}_{L^{\infty}(\R^n)}< \epsilon
\end{equation}
 $F[\cdot, h_0]$ is a contraction mapping on the subspace
\begin{equation}
 \{h \in X_{\infty} \Big| \norm{h}_{X_{\infty}} < C \norm{h_0}_{L^{\infty}(\R^n)} \} \subset X_{\infty}.
\end{equation}
This proves the existence of a solution $h_t \in X_{\infty}$ with $\norm{h_t}_{X_{\infty}} < C \norm{h_0}_{L^{\infty}(\R^n)}$ to the weak Ricci DeTurck equation (\ref{weak-RdT}) for $L^{\infty}$ small initial data. They also prove that such a weak solution $h_t$ is analytic on $\R^n\times(0,\infty)$ and that the following decay rates hold for every multi-index $\alpha \in \N^n_0$ and $k\in \N_{0}$
\begin{equation}
\label{KL-decay-rates}
\sup_{x\in \R^n}\sup_{t >0} | (t^{\frac{1}{2}}\nabla)^{\alpha}(t\partial_t)^k h(x,t)| \leq c \norm{h_0}_{L^{\infty}(\R^n)}R^{|\alpha|+k}(|\alpha| + k)!.
\end{equation}
Here $R=R(n)$, $c = c(n)$ are constants depending on $n$ only. Finally, they construct an analytical operator 
\begin{equation}
A: B_{\epsilon}(0) \subset C^{0}(\R^n) \longrightarrow X_{\infty}
\end{equation}
for sufficiently small $\epsilon=\epsilon(n)>0$ such that $A(h_0) \in X_{\infty}$ is the weak solution to (\ref{weak-RdT}) for $\norm{h_0}_{L^{\infty}(\R^n)} < \epsilon$. 

Because the operator $A$ is analytic, we can choose $\epsilon>0$ sufficiently small such that 
\begin{equation}
\norm{A(h^1_0)-A(h^2_0)}_{X_{\infty}} \leq C_1 \norm{h^1_0-h^2_0}_{C^{0}(\R^n)}
\end{equation}
for $h^1_0, h^2_0 \in B_{\epsilon}(0) \subset C^0(R^n)$ and $C_1 = C_1(\epsilon)$ a constant. Inspecting the norm on $X_{\infty}$ we then see that
\begin{equation}
\label{l-infty-estimate}
\sup_{0<t<\infty} \norm{h^1_t - h^2_t}_{L^{\infty}(\R^n)} \leq C_1 \norm{h^1_0-h^2_0}_{L^{\infty}(\R^n)},
\end{equation}
where $h^i_t = A(h^i_0)(\cdot, t)$ for $i = 1,2$.

In this paper we will prove regularity of the weak solution $h_t$ up to the boundary $t=0$, i.e. for smooth initial data $h_0$ the weak solution $h_t$ and its spatial derivatives are continuous on $\R^n\times[0,\infty)$.

\subsection{Ricci flow and long time existence}
\label{RicciFlow}
A family of time dependent metrics $(\tilde{g}_t)_{t\geq0}$ is a solution to the Ricci flow equation if
\begin{equation}
\label{RF}
\partial_t \tilde{g}_t = -2 Ric(\tilde{g}_t), \quad t \geq 0.
\end{equation}
Ricci and Ricci DeTurck flow are related by a family of time dependent diffeomorphisms (see \cite{Topping} for details): Assume that $(g_t)_{t\geq 0}$ is a solution to the Ricci DeTurck equation (\ref{RdT}) and the family of time dependent diffeomorphisms $\Phi_t$ generated by the vector field $X(g_t)$ (see (\ref{X-eqn}))
\begin{align}
\label{Phi-eqn}
\partial_t \Phi_t &= X_{\overline{g}}(g_t-\overline{g})(\Phi_t), \quad t\geq0 \\
			\Phi_0 &= id 
\end{align}
exists, then the pullback $\tilde{g}_t = \Phi_t^{\ast}g_t$ is a solution to the Ricci flow equation (\ref{RF}). In \cite[Lemma 9.1]{MilesSimon} it was shown that for perturbations $g_t \in C^{\infty}(\R^n \times [0,\infty))$ of the Euclidean metric on $\R^n$ evolving under Ricci DeTurck flow this family of diffeomorphisms $\Phi_t \in C^{\infty}(\R^n \times [0,\infty))$ exists as long as the decay rates (\ref{KL-decay-rates}) hold. 

\section{Regularity of weak solution}
In this section we will study the behavior at $t=0$ of the weak solution $h_t$ to the Ricci DeTurck equation constructed in \cite[Theorem 4.3]{KochAndLamm}. Our main goal will be to prove the following theorem:
\begin{thm}
\label{RdT-smooth}
There exists an $\epsilon = \epsilon(n)>0$ such that for smooth initial data $h_0\in C^{\infty}(\R^n)$ satisfying $\norm{h_0}_{L^{\infty}(\R^n)} \leq \epsilon$, a classical smooth solution $h_t \in C^{\infty}(\R^n \times [0,\infty))$ to the Ricci DeTurck equation (\ref{RdT-simplified}) exists. 
\end{thm}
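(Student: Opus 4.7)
Koch--Lamm's construction (Section \ref{section-KL}) already yields a weak solution $h_t = A(h_0) \in X_\infty$ that is analytic on $\R^n\times(0,\infty)$ with the derivative estimates \eqref{KL-decay-rates}, so the task reduces to promoting regularity up to the initial slice $t=0$. I would proceed in two stages.

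\emph{Stage 1: compactly supported smooth data.} For $\tilde h_0 \in C^\infty_c(\R^n)$ with $\norm{\tilde h_0}_{L^\infty}<\epsilon$, the initial metric $g_{eucl}+\tilde h_0$ has bounded geometry (smooth, with compactly supported perturbation, hence bounded curvature and bounded derivatives of all orders). Standard short-time quasilinear parabolic theory applied directly to the strongly parabolic system \eqref{RdT-simplified}---or, equivalently, Shi's short-time existence for Ricci flow combined with the diffeomorphism correspondence recalled in Section \ref{RicciFlow}---produces a classical smooth solution $\bar h \in C^\infty(\R^n\times[0,T_0])$ on some short interval. This $\bar h$ satisfies the integral equation \eqref{weak-RdT} and, for $T_0$ small enough, lies in the $X_{T_0}$ contraction ball of Koch--Lamm; by uniqueness of the fixed point we get $\bar h = A(\tilde h_0)|_{[0,T_0]}$. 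Combined with the Koch--Lamm interior smoothness on $\R^n\times(0,\infty)$, this yields $A(\tilde h_0) \in C^\infty(\R^n\times[0,\infty))$ for every smooth compactly supported initial data.

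\emph{Stage 2: general smooth data via comparison and localization.} For $h_0 \in C^\infty(\R^n)$ with $\norm{h_0}_{L^\infty}<\epsilon$, smoothness of $h_t$ at any spacetime point $(x_0,0)$ is a local statement, so I fix $x_0 \in \R^n$ and radii $r, T > 0$. Choose $R \gg r$ and set $\tilde h_0 = \eta_{R,x_0}h_0 \in C^\infty_c(\R^n)$, so $\tilde h_0 \equiv h_0$ on $B_R(x_0)$ and $\norm{\tilde h_0}_{L^\infty} < \epsilon$. By Stage 1, $\tilde h_t = A(\tilde h_0)$ is classical smooth on $\R^n\times[0,\infty)$, matching $h_0$ on $B_R(x_0)$ at $t=0$. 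The difference $w_t = h_t - \tilde h_t$ satisfies a linear parabolic equation obtained by subtracting \eqref{RdT-simplified} for $h$ and $\tilde h$, with coefficients depending smoothly on $h,\tilde h,\nabla h,\nabla \tilde h$ and controlled on $\R^n\times(0,T]$ via \eqref{KL-decay-rates}. The initial data $w_0 = (1-\eta_{R,x_0})h_0$ vanishes on $B_R(x_0)$. A pseudolocality argument (Gaussian heat-kernel bounds for the linear equation, exploiting that $w_0$ vanishes on a large ball around $x_0$) gives $\sup_{B_r(x_0)\times[0,T]}|w_t| \to 0$ as $R\to\infty$. Iterating interior Schauder estimates for the linear equation for $w$ upgrades this to $C^\infty$ regularity of $w_t$ up to $t=0$ on $B_{r/2}(x_0)$, with $w_t|_{t=0}=0$ there. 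Since $\tilde h_t$ is smooth up to $t=0$ by Stage 1, $h_t = w_t + \tilde h_t$ is $C^\infty$ on $B_{r/2}(x_0)\times[0,T]$; as $x_0, r, T$ are arbitrary, $h_t \in C^\infty(\R^n\times[0,\infty))$.

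\emph{Main obstacle.} The delicate point is the bootstrap in Stage 2. Pseudolocality yields $L^\infty$-smallness of $w_t$ on compact spacetime boxes for $R$ large, but upgrading this to full $C^\infty$ regularity up to $t=0$ must accommodate the mild singularities $|\nabla h|, |\nabla\tilde h| = O(t^{-1/2})$ appearing in the coefficients of the linear equation for $w$ through \eqref{KL-decay-rates}. An alternative avenue, avoiding the linear-equation bootstrap altogether, is to work directly with the integral representation $w = S[h_0-\tilde h_0] + V[h] - V[\tilde h]$ and estimate $w$ together with its spatial derivatives pointwise on $B_{r/2}(x_0)\times[0,T]$, exploiting the explicit Gaussian structure of the kernel $K$ together with the Koch--Lamm bounds on $Q_0$ and $Q_1$.
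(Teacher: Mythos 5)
Your Stage 1 matches the paper exactly: the compact case is handled via Shi's short-time existence plus uniqueness of the Koch--Lamm fixed point (this is the paper's lemma \ref{compact-case-smooth}). Stage 2, however, takes a genuinely different route and has a gap that is not resolved.

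The paper handles non-compact data by proving a \emph{local a priori boundary Hölder estimate} (lemma \ref{localisation-estimate}): if $u$ solves the quasilinear system $(\partial_t - \Delta)u = Q[u]$ on $B_{2r}\times(0,r^2)$ with $\norm{u}_{L^\infty} + \norm{u_0}_{C^{2,2\alpha}}$ small, then $\norm{u}_{C^{2,2\alpha;1,\alpha}(B_r\times(0,r^2))}$ is controlled. This estimate is uniform over the approximating flows $h^i_t$ with compactly supported data, which lets one pass to a limit by Arzel\`a--Ascoli and then bootstrap via boundary Schauder theory for the nonlinear equation (whose second-order coefficients $(g_{eucl}+h)^{ab}$ are continuous up to $t=0$ once $h\in C^{2,2\alpha;1,\alpha}$ is known). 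The crucial point is that lemma \ref{localisation-estimate} is proved by an iteration in weighted H\"older norms that exploits the quasilinear structure --- the singular behaviour of $\nabla u$ near $t=0$ is absorbed by smallness of $u$ --- rather than by treating the coefficients as given.

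Your Stage 2 instead writes a \emph{linear} parabolic equation for $w = h - \tilde h$ with coefficients depending on $\nabla h, \nabla\tilde h$, and then proposes to upgrade $L^\infty$-smallness of $w$ to $C^\infty$ regularity up to $t=0$ by ``iterating interior Schauder estimates.'' This is where the argument breaks. Interior Schauder estimates degenerate as $t\to 0$ by design; to reach the initial slice one needs boundary estimates, and those require the coefficients to be H\"older continuous up to $t=0$. But the first-order coefficients involve $\nabla h_t$, which you only control as $O(t^{-1/2})$ via the Koch--Lamm estimates (\ref{KL-decay-rates}) --- you do not yet know $\nabla h_t$ stays bounded as $t\to 0$, since that is precisely what you are trying to prove. (For $\tilde h$ this is fine by Stage 1, but not for $h$.) Treating the difference equation as linear with given coefficients loses the quasilinear structure that the paper's iteration uses to close. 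Note also that showing $w^{(R)}$ is smooth up to $t=0$ for fixed $R$ is logically equivalent to showing $h$ is smooth up to $t=0$ (since $\tilde h^{(R)}$ already is), so the argument risks circularity unless the $R\to\infty$ limit is taken with uniform higher-order control, which you do not have.

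There is a second, milder issue: the pseudolocality claim $\sup_{B_r(x_0)\times[0,T]}|w|\to 0$ as $R\to\infty$ also needs proof. The Lipschitz estimate \eqref{l-infty-estimate} from the Koch--Lamm analyticity is a \emph{global} $L^\infty$ bound in terms of $\norm{h_0-\tilde h_0}_{L^\infty(\R^n)}$, which does not go to zero as $R\to\infty$; turning it into a local statement requires Gaussian off-diagonal bounds for the $w$-equation with its $O(t^{-1/2})$ drift, which is again nontrivial. You do flag the obstacle honestly and sketch an ``alternative avenue'' via the Duhamel representation, but as stated it is a heuristic, not a proof. The missing ingredient in both of your routes is essentially the paper's lemma \ref{localisation-estimate}, and that lemma is the real content of the paper's proof of theorem \ref{RdT-smooth}.
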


The first step is to prove that for smooth initial data of compact support the weak solution is classical and smooth. Approximating our intial data by compactly supported functions, we then prove $C^{2,1}$ regularity of $h_t$. The main difficulty at this step will be to prove a local boundary estimate using standard parabolic Hölder estimates, which allows us to pass to the limit via Arzelà–Ascoli and a diagonal argument. In the final step we will bootstrap our boundary estimate to show that the solution $h_t$ and its derivatives are continuous up to $t=0$.

Below we begin by proving the compact case.

\begin{lem}
\label{compact-case-smooth}
There exists an $\epsilon=\epsilon(n)>0$ such that for $h_0 \in C^{\infty}_c(\R^n)$ satisfying $\norm{h_0}_{L^{\infty}(\R^n)}< \epsilon$ the weak solution $h_t$ is classical and smooth on $\R^n \times [0,\infty)$.
\end{lem}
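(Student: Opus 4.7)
The plan is to exploit the uniqueness built into the Koch-Lamm contraction argument of Section \ref{section-KL}. I would independently produce a short-time classical smooth solution $\tilde{h}_t$ by standard quasilinear parabolic theory, show that $\tilde{h}_t$ is also a fixed point of $F[\,\cdot\,, h_0]$ inside the contraction ball in $X_\tau$ for some small $\tau > 0$, and conclude by uniqueness that $\tilde{h}_t$ coincides with the weak solution $h_t$ on $\R^n \times [0,\tau]$. Combined with analyticity of $h_t$ on $\R^n \times (0,\infty)$ from \cite{KochAndLamm}, this yields $h_t \in C^{\infty}(\R^n \times [0,\infty))$.

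For the first step, since $h_0 \in C^{\infty}_c(\R^n)$ with $\norm{h_0}_{L^{\infty}(\R^n)} < \epsilon$, the coefficient $(\delta + h_0)^{ij}$ in (\ref{RdT-simplified}) is uniformly close to $\delta^{ij}$, so the equation is uniformly strongly parabolic near $t=0$. Standard theory for quasilinear parabolic systems on $\R^n$ with smooth, compactly supported initial data (or, alternatively, Shi-type arguments specialized to Ricci DeTurck flow) yields a classical solution $\tilde{h}_t \in C^{\infty}(\R^n \times [0,\tau])$ for some $\tau = \tau(h_0) > 0$. The parabolic maximum principle keeps $\norm{\tilde{h}_t}_{L^{\infty}(\R^n)}$ comparable to $\norm{h_0}_{L^{\infty}(\R^n)}$, and the compact support of $h_0$ together with off-diagonal parabolic bounds forces rapid spatial decay of $\tilde{h}_t$ and its derivatives on $[0,\tau]$.

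The matching step proceeds as follows. Because $\tilde{h}_t$ is classical on $[0,\tau]$ with adequate decay, Duhamel's principle applied to (\ref{RdT-simplified}) shows $\tilde{h} = F[\tilde{h}, h_0]$, so $\tilde{h}$ is a fixed point of $F[\,\cdot\,, h_0]$ on $\R^n \times [0,\tau]$. Estimating each piece of $\norm{\cdot}_{X_\tau}$ — the $L^\infty$ part from the maximum principle and the two parabolic-scaled gradient terms from interior Shi-type bounds applied at scale $R^2 \leq \tau$ — I would verify $\norm{\tilde{h}}_{X_\tau} \leq C \norm{h_0}_{L^{\infty}(\R^n)}$, placing $\tilde{h}$ inside the Koch-Lamm contraction ball. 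Uniqueness of the fixed point (restricted to $[0,\tau]$) forces $\tilde{h}_t = h_t$ on this slab, so $h_t$ is smooth up to and including $t=0$. Gluing with the analyticity of $h_t$ on $\R^n \times (0,\infty)$ completes the proof.

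The main obstacle is the $X_\tau$-norm bound for $\tilde{h}$. The gradient contributions to $\norm{\cdot}_{X_\tau}$ must be controlled by a constant times $\norm{h_0}_{L^{\infty}(\R^n)}$ alone — in particular, not by the high derivatives of the specific compactly supported $h_0$, which could be arbitrarily large. This requires estimates that are uniform across the class of small-$L^{\infty}$ initial data; in practice one treats (\ref{RdT-simplified}) as a perturbation of the heat equation on the short cylinder $\R^n \times [0,\tau]$, uses the contraction property of $F$ directly to bound $\tilde{h} - S[h_0]$, and absorbs the nonlinear terms by choosing $\tau$ small depending only on $n$ and $\epsilon$.
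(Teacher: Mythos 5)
Your proposal follows the paper's argument exactly: produce a short-time classical solution (the paper does this via Shi's short-time existence and derivative estimates, which you also name as an alternative), identify it with the Koch--Lamm weak solution through the uniqueness of fixed points of $F[\,\cdot\,,h_0]$ in $X_T$, and then invoke analyticity of the weak solution on $\R^n\times(0,\infty)$ to conclude smoothness on the closed half-space. One small remark on your final paragraph: the $X_\tau$-bound need not be uniform over the class of small-$L^\infty$ initial data --- for the fixed compactly supported $h_0$, Shi's estimates together with the parabolic maximum principle already place $g_t-g_{eucl}$ in $X_T$ for $T$ small (possibly depending on $h_0$ through its higher derivatives), and that is all the uniqueness in \cite[Theorem 4.3]{KochAndLamm} requires.
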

\begin{proof}
For initial data $h_0\in C^{\infty}_c(\R^n)$ the curvature of the resulting metric $g_0 = g_{eucl} + h_0$ is bounded. Therefore by \cite[Theorem 4.3]{Shi89} there exists a smooth solution $g_t$, $0\leq t\leq T$, to the Ricci DeTurck flow, where $T >0$ depends on $n$ and the curvature bound. Furthermore by choosing $T$ sufficiently small (\cite[Theorem 2.5 \& Lemma 4.1]{Shi89}) we can ensure that $g_t - g_{eucl} \in X_{T}$. Since the weak solution $h_t$ to the Ricci DeTurck equation (\ref{weak-RdT}) is unique by \cite[Theorem 4.3]{KochAndLamm}, it is identical to $g_t - g_{eucl}$ on $(0,T) \times \R^n$. As the weak solution $h_t$ is analytic on $(0,\infty) \times \R^n$, we deduce the desired result.
\end{proof}

We now prove an a priori local boundary Hölder estimate for $h_t$. Our argument closely follows \cite[Prof. 2.5]{RB1}, where the corresponding interior estimates were derived. This estimate will allow us to approximate non-compact, locally Hölder-regular initial data by a sequence of compactly supported smooth initial data and show that the corresponding flows converge in a local Hölder sense.

Let us fix some notation first. If $\Omega \subset \R^n\times\R$ is some parabolic neighbourhood (e.g. $\Omega = B_r(0)\times(0,r^2))$) we denote by $C^{2m,m}(\Omega)$ the space of functions differentiable $i$ times in the spatial direction and $j$ times in the time direction as long as $i+2j \leq 2m$. For $\alpha\in(0,\frac{1}{2})$ the corresponding Hölder space will be denoted by $C^{2m,2\alpha; m , \alpha}(\Omega)$. Greek letters will always refer to the Hölder exponent. In the following we will use weighted Hölder norms on $C^{2m,2\alpha; m , \alpha}(\Omega)$, which are invariant under parabolic dilations: Assume 
\begin{equation}
r_{\Omega} = \min(r : \Omega \subset B_r(p)\times[t-r^2, t] \text{ for some } p, t) < \infty
\end{equation}
then
\begin{equation}
 \norm{u}_{C^{2m,2\alpha;m,\alpha}(\Omega)} = \sum_{|\iota|+2k \leq 2m} r_{\Omega}^{|\iota| + 2k} \left( \norm{\nabla^{\iota}\partial_t^k u}_{C^{0}} + r_{\Omega}^{2\alpha}[\nabla^{\iota} \partial_t^k u]_{2\alpha,\alpha}\right)
\end{equation}
When $m=0$ we will write $\norm{u}_{C^{2\alpha;\alpha}(\Omega)} := \norm{u}_{C^{0,2\alpha;0,\alpha}(\Omega)}$ for brevity.

By $L$ we will denote a second order elliptic operator with real coefficients and constant of ellipticity $\kappa>0$ of the form $L(x,t) = a^{ij}(x,t)\partial_i\partial_j + b^{i}(x,t)\partial_i + c(x,t)$.

For $z = (x,t) \in \R^{n+1}$ we will denote by $\Omega_r(z)$ the parabolic neighborhood $B_r(x)\times(t-r^2, t) \subset \R^{n+1}$.

Now we recall an a priori estimate for the linear parabolic initial value problem.
\begin{lem}
\label{parabolic-estimate}
Let $\Omega = \R^n\times(0,\infty)$ and fix $z=(x_0,t_0)\in \R^{n+1}$. Assume that $\norm{a,b,c}_{C^{2\alpha;\alpha}(\Omega_{2r}(z)\cap \Omega)} \leq K$, $u \in C^{2,2\alpha;1,\alpha}(\Omega_{2r}(z) \cap \Omega )$, $f \in C^{2\alpha;\alpha}(\Omega_{2r}(z) \cap \Omega)$ and $u_0 \in C^{2,\alpha}(B_{2r}(x_0))$ . If $u$ solves the parabolic equation
\begin{align}
\partial_t u - Lu &= f, \qquad \text{on } \Omega_{2r}(z)\cap \Omega, \\
					u(\cdot, 0) &= u_0, \qquad \text{on } \Omega_{2r}(z)\cap \partial \Omega,
\end{align}
then we have the boundary estimate
\begin{align}
\norm{u}_{C^{2,2\alpha;1,\alpha}(\Omega_{r}(z)\cap \Omega)} &\leq C \Big(r^2 \norm{f}_{C^{2\alpha;\alpha}(\Omega_{2r}(z)\cap \Omega)} \\
\nonumber &\qquad \qquad + \norm{u}_{C^0(\Omega_{2r}(z)\cap \Omega)} + \norm{u_0}_{C^{2,2\alpha}(B_{2r}(x_0))} \Big),
\end{align}
where $C$ is a constant that depends on $\alpha$, $n$, $\kappa$ and $K$.
\end{lem}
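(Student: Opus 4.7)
The plan is to combine parabolic scaling with classical parabolic Schauder theory, ultimately reducing to the interior estimate \cite[Prof. 2.5]{RB1} plus a control of the initial data. First, by the parabolic rescaling $(y, s) = ((x - x_0)/r, (t - t_0)/r^2)$ the weighted Hölder norms are scale-invariant by construction and the rescaled coefficients continue to obey the $C^{2\alpha;\alpha}$ bound $K$ (with harmless factors of $r$ absorbed into $K$), so I may assume $r = 1$. If $t_0 > 4$, then $\Omega_2(z) \subset \Omega$ is purely interior and the estimate is exactly \cite[Prof. 2.5]{RB1} with the initial-data term absent. The nontrivial regime is $t_0 \leq 4$, where $\Omega_2(z)$ meets $\{t = 0\}$.

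For this regime I would extend $u_0$ to some $\tilde u_0 \in C^{2,2\alpha}(\R^n)$ of compact support with $\norm{\tilde u_0}_{C^{2,2\alpha}(\R^n)} \leq C \norm{u_0}_{C^{2,2\alpha}(B_2(x_0))}$ via a standard extension operator, and let $v$ solve the constant-coefficient problem $(\partial_t - L_0) v = 0$ on $\R^n \times [0,\infty)$ with $v|_{t=0} = \tilde u_0$, where $L_0$ is $L$ with coefficients frozen at $(x_0, 0)$. Classical Gaussian-kernel estimates yield $\norm{v}_{C^{2,2\alpha;1,\alpha}(\R^n\times[0,\infty))} \leq C \norm{\tilde u_0}_{C^{2,2\alpha}(\R^n)}$. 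The remainder $w = u - v$ then satisfies $(\partial_t - L) w = f + (L - L_0) v$ on $\Omega_2(z) \cap \Omega$ with \emph{zero} initial data on $B_2(x_0)$, and the new right-hand side is still in $C^{2\alpha;\alpha}$ because the Hölder continuity of the coefficients gives $\norm{(L - L_0) v}_{C^{2\alpha;\alpha}} \leq C K \norm{v}_{C^{2,2\alpha;1,\alpha}}$.

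What remains is the boundary Schauder estimate in the special case of zero initial data, which is the main obstacle: a naive extension by zero across $\{t = 0\}$ breaks $C^{2,2\alpha;1,\alpha}$ regularity because of the corner compatibility $\partial_t w(\cdot, 0) = f(\cdot, 0)$, which need not vanish. The standard remedy is to prove the zero-initial-data boundary estimate ab initio — first for constant-coefficient operators on the half-space via explicit kernel computations and the Campanato characterization of parabolic Hölder spaces, then for variable coefficients via the freeze-and-perturb argument — working on the smaller cylinder $\Omega_1(z)$ so that the lateral part of $\partial \Omega_1(z)$ stays interior to $\Omega_2(z)$ and requires no boundary condition. This is the standard content of parabolic Schauder theory, as in Lieberman's \emph{Second Order Parabolic Differential Equations}, Chapter 4, which could alternatively be cited directly.
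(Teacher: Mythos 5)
Your proposal is correct, but it takes a more elaborate route than the paper. The paper's proof reduces to zero initial data by simply subtracting the time‑independent function $u_0$: since $\partial_t u_0 = 0$, the difference $u - u_0$ solves $(\partial_t - L)(u-u_0) = f + L u_0$ with vanishing initial trace, and $L u_0 \in C^{2\alpha;\alpha}$ is controlled by $K \norm{u_0}_{C^{2,2\alpha}}$; after scaling to $r=1$ the result then follows directly from the zero‑initial‑data boundary Schauder estimate in Krylov \cite[Exercise 9.2.5 and Remark 8.11.2]{KYL}. Your decomposition $u = v + w$, with $v$ a caloric extension of (an extension of) $u_0$ under a frozen‑coefficient operator, achieves the same reduction but costs you two extra steps — an extension operator and a constant‑coefficient kernel estimate — neither of which is needed: there is no smallness to be gained from freezing coefficients here since the constant is allowed to depend on $K$, and the crude subtraction of $u_0$ already produces a right‑hand side in $C^{2\alpha;\alpha}$. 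Your identification of the corner‑compatibility obstruction and the observation that the zero‑initial‑data boundary estimate must be proved ab initio (not by even reflection or zero extension) is exactly right and is precisely what the cited Krylov results handle; citing Lieberman, Chapter 4, would serve equally well. One small inaccuracy: the purely interior case is not covered by \cite[Prop.\ 2.5]{RB1}, which is the \emph{nonlinear} a priori estimate that the subsequent Lemma 4.3 of this paper mirrors; the relevant reference for the linear interior Schauder estimate is again Krylov (e.g.\ Theorem 8.11.1), though this is cosmetic since the boundary estimate subsumes the interior one.
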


\begin{proof}
By scale invariance we can assume without loss of generality $r=1$. Then the result follows from \cite[Exercise 9.2.5]{KYL} applied to $u-u_0$ and  \cite[Remark 8.11.2]{KYL}.
\end{proof}

We now use above lemma \ref{parabolic-estimate} to prove the main estimate of this section.
\begin{lem}
\label{localisation-estimate}
Fix $r>0$ and $\alpha \in (0, \frac{1}{2})$. Consider the parabolic neighborhoods $\Omega_r = B_r \times (0,r^2) \subset \Omega'_{2r} = B_{2r} \times (0, r^2)$ and assume that $u \in C^{2,2\alpha; 1,\alpha}(\Omega'_{2r})$ satisfies the equation
\begin{align}
\label{parabolic-eqn}
(\partial_t - \Delta)u &= Q[u] = f_1 (r^{-1} x, u) \cdot \nabla u \otimes \nabla u + f_2(r^{-1}x, u)\cdot u\otimes \nabla^2 u \\
u(\cdot, 0) & = u_0,
\end{align}
where $f_1$, $f_2$ are smooth vector-valued functions in $x$ and $u$ and $f_1$, $f_2$ can be paired with tensors $\nabla \otimes \nabla$ and $u \otimes \nabla^2 u$ respectively.
Then there are constants $\epsilon'>0$ and $C'< \infty$ depending only on $\alpha$, $n$ and the $f_i$ such that if 
\begin{equation}
H = \norm{u}_{L^{\infty}(\Omega'_{2r})} + \norm{u_0}_{C^{2,2\alpha}(B_{2r})}  < \epsilon',
\end{equation}
then
\begin{equation}
\label{localisation-estimate-inequality}
\norm{u}_{C^{2,2\alpha; 1, \alpha}(\Omega_r)} < C' H.
\end{equation}
Moreover, the lemma still holds if u is vector-valued.
\end{lem}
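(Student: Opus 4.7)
The approach is to rewrite \eqref{parabolic-eqn} as a quasi-linear parabolic PDE and invoke the linear boundary Hölder estimate of Lemma \ref{parabolic-estimate} in a bootstrap argument driven by the smallness of $H$. Moving the top-order nonlinearity to the principal part, I would set
\begin{equation*}
a^{ij}(x,u) := \delta^{ij} + f_2^{ij}(r^{-1}x,u)\cdot u,
\end{equation*}
so that \eqref{parabolic-eqn} becomes $\partial_t u - a^{ij}(x,u)\,\partial_i\partial_j u = f_1(r^{-1}x,u)\cdot\nabla u\otimes\nabla u$. For $H$ small enough, $\norm{u}_{L^{\infty}}$ is small and $(a^{ij})$ is uniformly elliptic with ellipticity constant, say, $\kappa=\tfrac{1}{2}$. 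By parabolic rescaling I may henceforth take $r=1$.

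The plan is then to apply Lemma \ref{parabolic-estimate} on a family of nested parabolic cylinders. For $1\leq\rho<\rho'\leq 2$, I would center parabolic neighborhoods at points $z=(x_0,1)$ with $x_0\in B_\rho$ and radius of order $\rho'-\rho$, cover $B_\rho\times(0,1)$ by finitely many such, and sum. Using the chain rule together with the smoothness of $f_1,f_2$ to estimate
\begin{equation*}
\norm{a^{ij}(x,u)}_{C^{2\alpha;\alpha}} \leq C(1+\Phi(\rho')),\qquad \norm{f_1(r^{-1}x,u)\cdot\nabla u\otimes\nabla u}_{C^{2\alpha;\alpha}} \leq C\,\Phi(\rho')^2,
\end{equation*}
where $\Phi(\rho) := \norm{u}_{C^{2,2\alpha;1,\alpha}(B_\rho\times(0,1))}$, Lemma \ref{parabolic-estimate} yields an inequality of the form
\begin{equation*}
\Phi(\rho) \leq C_0\Big((\rho'-\rho)^{-\lambda}\,\Phi(\rho')^2 + H\Big),
\end{equation*}
for some $C_0 = C_0(n,\alpha,f_1,f_2)$ and $\lambda > 0$, valid as long as $\Phi(\rho')$ stays bounded so that the Schauder constant in Lemma \ref{parabolic-estimate} does not blow up. The assumption $u\in C^{2,2\alpha;1,\alpha}(\Omega'_{2r})$ ensures $\Phi(2)<\infty$, and a standard iteration lemma on shrinking-radius inequalities — equivalently a continuity argument on $\{\rho\in[1,2]:\Phi(\rho)\leq 2C_0 H\}$ — closes the bootstrap provided $H$ is small enough that the quadratic term $\Phi(\rho')^2$ can be absorbed, producing \eqref{localisation-estimate-inequality}.

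The main obstacle is that the nonlinearity $f_2(r^{-1}x,u)\cdot u\otimes\nabla^2 u$ is of top order, so the PDE cannot be treated as a small perturbation of the heat equation with a Hölder source: the second-derivative term must be absorbed into the principal symbol, producing a quasi-linear operator whose Hölder coefficients depend on $u$ itself, and the Schauder constant therefore feeds back into the very quantity one is trying to bound. It is the smallness of $H$ that must simultaneously secure uniform ellipticity, control the Hölder norm of $a^{ij}(x,u)$, and make the quadratic remainder $\Phi(\rho')^2$ absorbable in the iteration; ensuring that these three requirements can be met in a single bootstrap is the delicate point of the proof.
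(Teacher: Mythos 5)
Your strategy captures the right general flavor — nested parabolic cylinders, the linear boundary Schauder estimate of Lemma \ref{parabolic-estimate}, and a bootstrap driven by smallness — but there is a genuine gap in the way you propose to close the iteration, and it is precisely the point the paper's proof is engineered to handle.

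The shrinking-radius inequality you write, $\Phi(\rho) \leq C_0\bigl((\rho'-\rho)^{-\lambda}\Phi(\rho')^2 + H\bigr)$, is quadratic in $\Phi(\rho')$, so absorbing it requires that $\Phi(\rho')$ itself already be small; but $\Phi$ is the full $C^{2,2\alpha;1,\alpha}$ norm, and the hypothesis $H<\epsilon'$ only controls $\norm{u}_{L^\infty}$ and $\norm{u_0}_{C^{2,2\alpha}}$, not any Hölder norm of derivatives of $u$. The continuity argument on $\{\rho\in[1,2]:\Phi(\rho)\leq 2C_0H\}$ has no valid base case: at the outer radius you only know $\Phi(2)<\infty$, not $\Phi(2)\lesssim H$, so the set could be empty. (The standard shrinking-radius iteration lemma applies to inequalities that are \emph{linear} in $\Phi(\rho')$ with small linear coefficient; a quadratic version needs a priori smallness somewhere.) The same issue makes your quasi-linear reformulation problematic: the Schauder constant in Lemma \ref{parabolic-estimate} depends on $K=\norm{a^{ij}(x,u)}_{C^{2\alpha;\alpha}}\leq C(1+\Phi(\rho'))$, so $C_0$ itself grows with the quantity you are bounding, and there is no fixed threshold below which the argument self-improves.

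The device the paper uses to supply the missing base case is the $\theta$-weighted Hölder norm $\norm{u}^{(\theta)}_{C^{2,2\alpha;1,\alpha}}$, which carries a factor $(r_\Omega\theta)^{|\iota|+2k}$ in front of each derivative term. The crucial property is that as $\theta\to 0$ every derivative term is suppressed and $\norm{u}^{(\theta)}\to\norm{u}_{C^0}$. The paper then takes radii $r_k=2-2^{-k}\uparrow 2$ with weights $\theta_k=r_{k+1}/r_k-1\downarrow 0$, producing quantities $a_k=\norm{u}^{(\theta_k)}_{C^{2,2\alpha;1,\alpha}(\Omega_k)}$ with $a_k\to\norm{u}_{C^0(\Omega')}\leq H$; the recursion $a_k\leq C(a_{k+1}^2+a_{k+1}^3+H)$ then propagates the bound $a_{k+1}\leq 2CH \Rightarrow a_k\leq 2CH$ backwards from $k=\infty$, where it is free. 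This is the step your proposal lacks. A second, smaller departure: the paper keeps the constant-coefficient Laplacian on the left and treats all of $Q[u]$, including $f_2\cdot u\otimes\nabla^2 u$, as a source. This is fine because the extra $u$ factor makes the contribution quadratic in $a_{k+1}$, not linear; and it has the advantage that the Schauder constant in Lemma \ref{parabolic-estimate} is genuinely fixed, avoiding the feedback loop your quasi-linear reformulation introduces. Your choice is not wrong in principle, but without a mechanism to pin down the constants it costs more than it buys here.
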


\begin{proof}
We will adapt the proof of \cite[Prop. 2.5]{RB1} to the initial value problem. We begin by introducing a new weighted norm for $0<\theta\leq 1$ and $\Omega$ a parabolic neighbourhood:
\begin{equation}
\norm{u}^{(\theta)}_{C^{2m,2\alpha; m, \alpha}(\Omega)} = \sum_{|\iota|+2k \leq 2m} (r_{\Omega} \theta)^{|\iota|+2k} \left( \norm{\nabla^{\iota} \partial_t^k u}_{C^{0}} + (r_{\Omega}\theta)^{2\alpha} [\nabla^{\iota} \partial_t^k u]_{2\alpha,\alpha} \right),
\end{equation}
For $\theta = 1$, this norm agrees with the norm defined above. We also introduce a weighted norm on $C^{2m,2\alpha}(\R^n)$ in an analogous way. Choosing any $z=(x,t)\in R^n \times (0,r^2)$ such that $B_{\theta r}(x) \subset B_r$ and applying lemma \ref{parabolic-estimate} on $\Omega_{\theta r}(z)$ we deduce
\vspace{0.3cm}
\begin{indentpar}{1cm}
\textit{Assume we are in the setting of lemma \ref{parabolic-estimate}, then}
\begin{align}
\label{weighted-parabolic-estimate}
\norm{u}^{(\theta)}_{C^{2,2\alpha; 1, \alpha}(\Omega_r)}&\leq C \Big( (r\theta)^2 \norm{f}^{(\theta)}_{C^{2\alpha;\alpha}(B_{(1+\theta)r}\times(0, r^2))} \\ \nonumber
& \qquad\qquad+ \norm{u}^{(\theta)}_{C^0(B_{(1+\theta)}r\times(0, r^2))} + \norm{u_0}^{(\theta)}_{C^{2,2\alpha}(B_{(1+\theta)r})} \Big)
\end{align}
\end{indentpar}
\vspace{0.3cm}
In the following we may assume by scaling invariance that $r = 1$ and we will abbreviate by $C$ any constant that depends on $n$, $\alpha$ and $f_i$, $i= 1, 2$.
Set 
\begin{align}
r_k &= \sum_{i=0}^k 2^{-i} = 2 - 2^{-k}, \\
\theta_k &= \frac{r_{k+1}}{r_k} -1, \\
\Omega_k &= B_{r_k}(0) \times [0, 1].
\end{align} 
By (\ref{weighted-parabolic-estimate}) for $r = r_k$ we have
\begin{equation}
a_k := \norm{u}^{(\theta_k)}_{C^{2,2\alpha; 1, \alpha}(\Omega_k)} \leq C \left(\theta_k^2 \norm{Q[u]}_{C^{2\alpha;\alpha}(\Omega_{k+1})} + H \right).
\end{equation}
Observe that since $\theta_k \rightarrow 0$, we have $a_k \rightarrow a_{\infty} = \norm{u}_{C^0(\Omega')} \leq H$. We now estimate $Q[u]$ in terms of $u$ using (\ref{parabolic-eqn}). For this note that for $i = 1, 2$ 
\begin{equation}
\norm{f_i(x,u)}^{(\theta_{k+1})}_{C^{2\alpha;\alpha}(\Omega_{k+1})} \leq C\left(1 + \norm{u}^{(\theta_{k+1})}_{C^{2\alpha;\alpha}(\Omega_{k+1})}\right) \leq C(1+a_{k+1})
\end{equation}
So we obtain
\begin{align}\nonumber
\norm{f_1 \cdot \nabla u \otimes \nabla u}^{(\theta_{k+1})}_{C^{2\alpha;\alpha}(\Omega_{k+1})} &\leq \norm{f_1}^{(\theta_{k+1})}_{C^{2\alpha;\alpha}(\Omega_{k+1})} \left(\norm{\nabla u}^{(\theta_{k+1})}_{C^{2\alpha;\alpha}(\Omega_{k+1})}\right)^2 \\
&\leq C \theta^{-2}_{k+1} \left( a_{k+1}^2 + a_{k+1}^{3}\right)
\end{align}
Similarly we have
\begin{equation}
\norm{f_2 \cdot u \otimes \nabla^2 u}^{(\theta_{k+1})}_{C^{2\alpha;\alpha}(\Omega_{k+1})} \leq C \theta^{-2}_{k+1} \left( a_{k+1}^2 + a_{k+1}^{3}\right).
\end{equation}
We conclude
\begin{equation}
\norm{Q[u]}^{(\theta_{k+1})}_{C^{2\alpha;\alpha}(\Omega_{k+1})} \leq C \theta^{-2}_{k+1} \left( a_{k+1}^2 + a_{k+1}^{3}\right).
\end{equation}
Hence
\begin{equation}
a_{k} \leq C(a_{k+1}^2 + a_{k+1}^{3} + H)
\end{equation}
Therefore the quantity $b_k = \frac{a_k}{H}$ satisfies the inequality
\begin{equation}
b_{k} \leq C(H b_{k+1}^2 + H^{2} b_{k+1}^{3} + 1).
\end{equation}
Assuming without loss of generality that $C>1$ and setting $\epsilon' = \frac{1}{16C^2}$ we see that
\begin{equation}
b_{k} \leq \frac{b_{k+1}^2}{16 C} + \frac{b_{k+1}^{3}}{16^2 C^{3}} + C,
\end{equation}
because we assumed that $H\leq \epsilon'$. From this we see that if $b_{k+1} \leq 2C$ then $b_{k} \leq 2C$ as well. Since $b_k \rightarrow \frac{a_{\infty}}{H} \leq 1 \leq 2C$ as $k \rightarrow \infty$ we conclude that $b_0 \leq 2C$ and thus $a_0 \leq 2CH$.
\end{proof}

Using above lemma we can now proceed to proving $C^{2,2\alpha; 1, \alpha}$-regularity. 

 \begin{thm}
\label{regularity-RdT}
Fix an $\alpha \in (0, \frac{1}{2})$. Then there exists an $\epsilon>0$ such that for any $h_0 \in C^{2,2\alpha}_{loc}(\R^n)$ and $\norm{h_0}_{L^{\infty}(\R^n)} \leq \epsilon$ the weak solution $h_t$ of the Ricci DeTurck equation (\ref{RdT-simplified}) constructed in \cite[Theorem 4.3]{KochAndLamm} is classical. In particular for any relatively compact open set $\Omega \subset\subset \R^n\times [0,\infty)$ we have $h_t \in C^{2,2\alpha; 1, \alpha}(\Omega)$.
\end{thm}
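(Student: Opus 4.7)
The plan is to approximate $h_0$ by compactly supported smooth data $h_0^{(k)}$, invoke Lemma \ref{compact-case-smooth} to obtain smooth classical flows $h^{(k)}_t$, derive uniform local $C^{2,2\alpha;1,\alpha}$ bounds by combining Lemma \ref{localisation-estimate} near the initial slice with the interior analyticity estimate (\ref{KL-decay-rates}), and then pass to the limit via Arzelà--Ascoli together with the Koch--Lamm stability estimate (\ref{l-infty-estimate}).

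Concretely, I would take a mollification $h_0^{(k)} = \phi_k \ast (\chi_k h_0) \in C^{\infty}_c(\R^n)$ with $\norm{h_0^{(k)}}_{L^{\infty}(\R^n)} < \epsilon$, $h_0^{(k)} \to h_0$ uniformly on $\R^n$, and $h_0^{(k)} \to h_0$ in $C^2_{loc}(\R^n)$, with local $C^{2,2\alpha}$ seminorms on $B_{2r}(x)$ uniformly dominated by those of $h_0$ on a slightly larger ball. Lemma \ref{compact-case-smooth} then produces smooth classical solutions $h^{(k)}_t \in C^{\infty}(\R^n\times[0,\infty))$ which agree with the Koch--Lamm weak solutions for $h_0^{(k)}$ and satisfy the uniform global bound $\norm{h^{(k)}_t}_{L^{\infty}(\R^n)} \leq C\epsilon$.

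The crucial step is to extract uniform local Hölder bounds. Fix a compact $K\subset\R^n$. The key observation is that the scaling-weighted norm $\norm{h_0}_{C^{2,2\alpha}(B_{2r}(x))}$ collapses to $|h_0(x)|$ as $r\to 0$, uniformly in $x\in K$, because its higher-order contributions all carry a positive power of $r$ while $h_0 \in C^{2,2\alpha}_{loc}$ gives locally bounded unweighted norms. Choosing $\epsilon$ so that $C\epsilon < \epsilon'/4$ and $r_0 = r_0(K)>0$ so that $\norm{h_0}_{C^{2,2\alpha}(B_{2r_0}(x))} < \epsilon'/4$ for every $x\in K$, the $C^2_{loc}$ convergence of the mollifications guarantees, for all sufficiently large $k$, that the quantity $H$ of Lemma \ref{localisation-estimate} is below $\epsilon'$ at every base point $(x,0)$ with $x\in K$. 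That lemma then yields the uniform boundary estimate
\begin{equation*}
\norm{h^{(k)}_t}_{C^{2,2\alpha;1,\alpha}(B_{r_0}(x)\times(0,r_0^2))} \leq C'\epsilon', \qquad x\in K.
\end{equation*}
For $t \geq r_0^2/2$, the Koch--Lamm interior bounds (\ref{KL-decay-rates}) control all derivatives of $h^{(k)}_t$ uniformly in $k$, and a finite cover of any relatively compact $\Omega\subset\R^n\times[0,\infty)$ combines the two to produce a uniform $C^{2,2\alpha;1,\alpha}(\Omega)$ bound for the tails of the sequence.

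A diagonal Arzelà--Ascoli argument on a countable exhaustion of $\R^n\times[0,\infty)$ then extracts a subsequence $h^{(k_j)}_t$ converging in $C^{2,2\beta;1,\beta}_{loc}$ for every $\beta<\alpha$ to a classical solution of (\ref{RdT-simplified}) with initial data $h_0$. The stability estimate (\ref{l-infty-estimate}) identifies this limit with the Koch--Lamm weak solution $h_t$ and forces the full sequence to converge, while lower semicontinuity of the weighted Hölder seminorms under local uniform convergence promotes the limiting regularity from $C^{2,2\beta;1,\beta}$ back to $C^{2,2\alpha;1,\alpha}(\Omega)$. The main obstacle is arranging the hypothesis $H < \epsilon'$ of Lemma \ref{localisation-estimate} uniformly: unlike the $L^\infty$ smallness, the $C^{2,2\alpha}$ norm of $h_0$ has no global smallness, so one must exploit the rescaling invariance of the weighted norm to localise to small spatial radii; after that, the rest is routine compactness combined with Koch--Lamm uniqueness.
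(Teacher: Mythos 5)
Your proposal follows essentially the same route as the paper: approximate $h_0$ by smooth compactly supported data, invoke Lemma \ref{compact-case-smooth}, control the quantity $H$ of Lemma \ref{localisation-estimate} by exploiting the fact that the scaling-weighted $C^{2,2\alpha}(B_{2r}(x))$ norm of $h_0$ shrinks toward $\norm{h_0}_{L^\infty}$ as $r\to 0$, combine with the Koch--Lamm interior estimates (\ref{KL-decay-rates}) for positive times, and pass to the limit by Arzel\`a--Ascoli and a diagonal argument with the stability estimate (\ref{l-infty-estimate}) to identify the limit. The one thing you make more explicit than the paper is the observation that the weighted norm collapses to $|h_0(x)|$ as $r\to 0$, which is the implicit justification for the paper's claim that a suitable $r_x$ exists; otherwise the arguments coincide.
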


\begin{proof}[Proof of the Theorem \ref{regularity-RdT}]
Choose $\epsilon'>0$ as in lemma \ref{localisation-estimate} above. Then choose $\epsilon>0$ such that a weak solution $h_t$ to the Ricci DeTurck equation (\ref{RdT-simplified}) exists for initial data $h_0$ satisfying $\norm{h_0}_{L^{\infty}(\R^n)} < \epsilon$.  By choosing $\epsilon$ sufficiently small we may assume that $\norm{h_t}_{L^{\infty}(\R^n)}\leq \frac{1}{3}\epsilon'$ uniformly for $t\geq 0$ (see \cite[theorem 4.3]{KochAndLamm}). Take a sequence $h^{i}_0 \in C^{\infty}_c(\R^n)$, $i = 1, 2, 3, \cdots$ satisfying $\norm{h^i_0}_{L^{\infty}(\R^n)} \leq \epsilon$, which locally converges to $h_0$ in $C^{2,2\alpha}_{loc}(\R^n)$ as $i \rightarrow \infty$. By lemma \ref{compact-case-smooth} a smooth solution $h^i_t$ to the Ricci DeTurck equation (\ref{RdT-simplified}) with initial data $h^i_0$ exists for all $i\in\N$. For each $x\in\R^n$ we can then choose an $r_x>0$ such that the quantity $H$ of lemma \ref{localisation-estimate} is less than $\frac{5}{6}\epsilon'$. Thus we can choose countable many points $(x_i)_{i\in \N} \in \R^n$ such that $\R^n\times \{t=0\}$ is covered by sets of the form $\Omega_i = B_{r_{x_i}}(x_i) \times [0, r_{x_i}^2)$ on which (\ref{localisation-estimate-inequality}) holds for all $h^i_0$ satisfying $\norm{h^i_0 - h_0}_{C^{2,2\alpha}(\Omega_i)} \leq \frac{1}{6} \epsilon'$. Note that by \cite[theorem 4.3]{KochAndLamm} we know that on any compactly supported open set $\Omega \subset\subset \R^n\times (0,\infty)$ we have uniform bounds of the form $\norm{h^i_t}_{C^{2,2\alpha;1,\alpha}(\Omega)} \leq C(\Omega)$. Thus, by Arzela-Ascoli and a diagonal argument we can pass to a limit giving us the desired result.
\end{proof}

\begin{remark}
Using similar arguments we can show that if we start with continuous initial data $h_0\in C^{0}(\R^n)$, the corresponding weak solution is continuous up to the boundary: Take $h^i_0 \in C^{2,2\alpha}(\R^n)$ converging uniformly to $h_0$ in $C^0(\R^n)$ and use the estimate (\ref{l-infty-estimate}) to prove convergence.
\end{remark}

Now we can prove theorem \ref{RdT-smooth}.

\begin{proof}[Proof of theorem \ref{RdT-smooth}]
Choose $\epsilon>0$ such that above theorem \ref{regularity-RdT} applies and thus $h\in C^{2,2\alpha;1,\alpha}_{loc}(\R^n\times [0,\infty))$. We can then write the Ricci DeTurck equation (\ref{RdT}) in the form 
\begin{equation}
\partial_t h - (g_{eucl} + h)^{ab}\nabla_a \nabla_b h = f(x,h) \ast \nabla h \ast \nabla h,
\end{equation}
for $f$ a smooth vector-valued function. The upshot is that the righthand side is of lower order than the lefthand side, allowing us to bootstrap \cite[theorem 8.12.1 \& exercise 8.12.4]{KYL} to prove that $h$ is smooth.
\end{proof}

\begin{remark}
Strictly speaking \cite[theorem 8.12.1 \& exercise 8.12.4]{KYL} only apply to proving interior regularity. However, one can check using lemma \ref{parabolic-estimate} that they can easily be adapted to hold in the boundary case.
\end{remark}

\section{$L^p$ bounds and $L^{\infty}$-decay rates for Ricci DeTurck flow}
\subsection{$L^p$ bounds}
In the following we will show that a solution $h_t$ to the Ricci DeTurck equations with initial data $h_0$  bounded in $L^p(\R^n)$ for some $1\leq p < \infty$ remains bounded in $L^p$, provided the initial data is sufficiently small in $L^{\infty}(\R^n)$. In particular we will prove the following theorem:
\begin{thm}
\label{main-lp-thm}
There exist constants $\tilde{\epsilon} = \tilde{\epsilon}(n)>0$ and $\tilde{C} = \tilde{C}(n)>0$ such that the following holds: Given a solution $h_t\in C^{2,1}(\R^n\times[0,\infty))$ to the Ricci DeTurck equation (\ref{RdT-simplified}) satisfying (i) $\norm{h_t}_{L^{\infty}(\R^n\times(0,\infty))}< \tilde{\epsilon}$ and (ii) $\norm{h_0}_{L^{p}(\R^n)}< \infty$ for some $1\leq p < \infty$ the estimate
\begin{equation}
\norm{h_t}_{L^p(\R^n)} \leq \tilde{C} \norm{h_0}_{L^p(\R^n)}, \quad t \geq 0,
\end{equation}
holds. If $p\geq 2$ we can take $\tilde{C}=1$.
\end{thm}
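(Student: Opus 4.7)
The plan is an $L^p$ energy estimate applied to (\ref{RdT-simplified}), done directly for $p\geq 2$ (recovering the Schnürer--Schulze--Simon bound with $\tilde C = 1$) and extended to $1\leq p<2$ by a regularization/limit argument. Throughout I work with a spatial cutoff $\eta_R$ as in Section~2 to legitimize integration by parts at spatial infinity, letting $R\to\infty$ at the end and controlling the boundary contributions via the Koch--Lamm decay rates (\ref{KL-decay-rates}) together with the assumed $L^p$-integrability of $h_0$.

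\textbf{The case $p\geq 2$.} Differentiating $\int_{\R^n}|h|^p\eta_R\,dx$ and substituting $(\partial_t-\Delta)h=Q_0[h]+\nabla Q_1[h]$, the Laplacian contributes, after one integration by parts,
\[
-p\int |h|^{p-2}|\nabla h|^2\eta_R\,dx - p(p-2)\int |h|^{p-4}(h^{ij}\nabla_a h_{ij})^2\eta_R\,dx + \text{(cutoff errors)},
\]
both genuine terms being non-positive for $p\geq 2$. The nonlinear contributions satisfy $|Q_0|\leq C|\nabla h|^2$ and, after a further integration by parts on $\nabla Q_1$, can be bounded using $|Q_1|\leq C|h||\nabla h|$; combined with the pointwise estimate $|h|^{p-1}\leq \|h\|_\infty|h|^{p-2}$, they are each dominated by $C(p)\|h\|_\infty\int|h|^{p-2}|\nabla h|^2\eta_R$. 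Choosing $\tilde\epsilon$ small enough that $C(p)\|h\|_\infty<p$ absorbs these into the good Dirichlet term, and sending $R\to\infty$ yields $\frac{d}{dt}\|h_t\|_{L^p}^p\leq 0$, giving $\tilde C = 1$.

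\textbf{The case $1<p<2$.} Here $|h|^{p-2}$ is singular at $h=0$, so I regularize by replacing $|h|^p$ with $F_\epsilon(h)=(|h|^2+\epsilon)^{p/2}-\epsilon^{p/2}$. The analogous integration by parts on the Laplacian produces a positive mixed term (since $-p(p-2)>0$), but Cauchy--Schwarz $(h^{ij}\nabla_a h_{ij})^2\leq|h|^2|\nabla h|^2\leq (|h|^2+\epsilon)|\nabla h|^2$ shows the net contribution is still bounded above by $-p(p-1)\int(|h|^2+\epsilon)^{p/2-1}|\nabla h|^2$, strictly coercive for $p>1$. The nonlinear terms admit the same $\|h\|_\infty$-small bound as in the $p\geq 2$ case (using $|h|\leq (|h|^2+\epsilon)^{1/2}$ in place of $|h|^{p-1}\leq \|h\|_\infty|h|^{p-2}$), and sending $\epsilon\to 0$ by dominated convergence gives the desired $L^p$ bound.

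\textbf{Main obstacle: $p=1$.} At $p=1$ the coercivity factor $p(p-1)$ vanishes and the direct energy argument only yields a $O(\epsilon)$ residual term that disappears in the limit. My plan is to deduce this case as a limit of the $p\in(1,2)$ estimates, using the monotone convergence $|h_t|^{p_k}\nearrow|h_t|$ as $p_k\downarrow 1$ (valid because $\|h_t\|_\infty<1$ once $\tilde\epsilon$ is small) together with Fatou's lemma. The subtle point is that the smallness threshold from the $(1,2)$ step---essentially $\|h\|_\infty<(p-1)/C$---degenerates as $p\to 1^+$, so keeping the constants $p$-uniform is delicate; this is likely the source of the theorem's asymmetric statement in which $\tilde C = 1$ is asserted only for $p\geq 2$. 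A careful Grönwall argument on $\int F_\epsilon$ retaining the residual negative contribution, combined with approximation of $h_0$ by $L^{p_*}$-functions for fixed $p_*\in(1,2)$, should close the argument at the cost of $\tilde C > 1$.
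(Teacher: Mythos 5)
Your treatment of $p\geq 2$ is essentially the paper's: regularize $|h|$, integrate by parts, show the mixed term $-p(p-2)|h|^{p-4}\langle\nabla_a h, h\rangle^2$ has the good sign, and absorb the $Q_0,Q_1$ contributions into the Dirichlet term using $\|h\|_\infty$-smallness (this is Lemma~\ref{local-lp-bound-p-greater-2} followed by Grönwall). That part is sound.

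For $1\leq p<2$, however, your absorption argument has a genuine gap that you yourself flag but do not resolve. After Cauchy--Schwarz the net coercive factor is $p(p-1)$, so absorbing the $C(p)\|h\|_\infty\int|h|_\delta^{p-2}|\nabla h|^2\eta_R$ contributions from $Q_0,Q_1$ requires $\|h\|_\infty\lesssim (p-1)$. This makes $\tilde\epsilon$ depend on $p$ and degenerate as $p\to 1^+$, whereas the theorem asserts $\tilde\epsilon=\tilde\epsilon(n)$ uniformly over $p\in[1,\infty)$, and $p=1$ gives zero coercivity outright. Your proposed rescue --- taking $p_k\downarrow 1$ with Fatou, or ``a careful Grönwall argument retaining the residual negative contribution'' --- cannot work as stated precisely because each $p_k$-estimate requires its own shrinking $\tilde\epsilon(p_k)$, so there is no single initial-data smallness hypothesis under which all of them are simultaneously valid; and the residual negative contribution carries the same vanishing $p(p-1)$ prefactor.

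The paper sidesteps this entirely with a different idea: rather than absorbing $Q[h]$ into the Dirichlet term, it simply bounds $\partial_t\int\eta_R|h|^p\leq \frac{C}{R^2}\int_{B_{2R}\setminus B_R}|h|^p + C\int\eta_R|\nabla h|^2$ for all $1\leq p\leq 2$ with $C=C(n)$ (Lemma~\ref{local-lp-bound-p-less-2}; the $\|h\|_\infty$ threshold here is just $1/2$). The extra Dirichlet term is then controlled by a separate space--time $L^2$ estimate $\int_0^\infty\int_{\R^n}|\nabla h|^2\leq 2\int|h_0|^p$ (Lemma~\ref{local-int-L2-bound} and Corollary~\ref{gradient-h-l2-bound}), obtained from the already-proved $p=2$ case together with $\int|h_0|^2\leq\int|h_0|^p$ when $\|h\|_\infty<1$. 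Grönwall then closes the argument with $\tilde C=2C_2+1>1$. This global-in-time $L^2$ bound on $\nabla h$, independent of $p$, is the key ingredient your proposal is missing; without it you have no way to make the $1\leq p<2$ estimate $p$-uniform.
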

\begin{remark}
\hspace{0.5cm}
\begin{enumerate}
\item
By estimate (\ref{KL-decay-rates}) we can relax condition (i) to $\norm{h_0}_{L^{\infty}(\R^n))}< \tilde{\epsilon}$ by choosing $\tilde{\epsilon}$ sufficiently small.
\item
The result agrees with the analogue for the linear heat equation on $\R^n$. 
\end{enumerate}

\end{remark}

Our strategy will be to find local $L^p$ estimates for the solution $h_t$ and integrate these to global ones using Grönwall's Lemma. For technical reasons we will first prove the result for $2\leq p < \infty$. This will yield an $L^2(\R^n\times (0, \infty))$ bound on $\nabla h$, which will allow us to generalize to the $1\leq p <2$ case. Below we state our first local $L^p$ bound.

\begin{lem}
\label{local-lp-bound-p-greater-2}
There exist constants $\tilde{\epsilon}>0$ and $C_1>0$ such that for a solution $h \in C^{2,1}(\R^n \times [0,\infty))$ with $\norm{h}_{L^{\infty}(\R^n \times [0,\infty))} <  \tilde{\epsilon}$ to the Ricci DeTurck equation (\ref{RdT-simplified}) the estimate 
\begin{equation}
\label{estimate-p-greater-2}
\partial_t \int_{\R^n} \eta_R |h|^p\, \mathrm{d}x \leq \frac{C_1(1+p)}{R^2} \int_{B_{2R}\setminus B_{R}} |h|^p\, \mathrm{d}x.
\end{equation}
holds in the barrier sense for $2 \leq p < \infty$.
\end{lem}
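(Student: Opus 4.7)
The plan is to rewrite the Ricci DeTurck equation in divergence form and test against $p|h|^{p-2}h^{ij}\eta_R$. Observing that $(\partial_t-\Delta)h = Q_0[h]+\nabla Q_1[h]$ with $\nabla Q_1[h]=\nabla_a\bigl(((\delta+h)^{ab}-\delta^{ab})\nabla_b h\bigr)$, I would combine $\Delta h$ and $\nabla Q_1[h]$ into a single divergence, yielding
\[
\partial_t h = \nabla_a\bigl((\delta+h)^{ab}\nabla_b h\bigr)+Q_0[h],
\]
where $|Q_0[h]|\leq C|\nabla h|^2$ for $|h|$ small. Multiplying by $p|h|^{p-2}h^{ij}\eta_R$ and integrating produces $\partial_t\int\eta_R|h|^p$ on the left; this step is interpreted in the barrier sense, for instance by first carrying out the computation with $|h|^p$ replaced by $(\varepsilon^2+|h|^2)^{p/2}$ and letting $\varepsilon\downarrow 0$.

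The next step is integration by parts on the divergence term. Using
\[
\nabla_a(|h|^{p-2}h^{ij}) = |h|^{p-2}\nabla_a h^{ij} + (p-2)|h|^{p-4}(h^{kl}\nabla_a h_{kl})h^{ij},
\]
and writing $\xi_a:=h^{ij}\nabla_a h_{ij}$, I obtain two nonnegative gradient terms
\[
p\int\eta_R|h|^{p-2}(\nabla_a h^{ij})(\delta+h)^{ab}\nabla_b h_{ij}\quad\text{and}\quad p(p-2)\int\eta_R|h|^{p-4}\xi_a(\delta+h)^{ab}\xi_b,
\]
plus a boundary term $-p\int(\nabla_a\eta_R)|h|^{p-2}h^{ij}(\delta+h)^{ab}\nabla_b h_{ij}$. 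For $\tilde\epsilon$ small the matrix $(\delta+h)^{ab}$ is pointwise $\geq \tfrac{1}{2}\delta^{ab}$, so the first gradient term is at least $\tfrac{p}{2}\int\eta_R|h|^{p-2}|\nabla h|^2$.

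Then I would absorb the boundary term and the nonlinear contribution $p\int\eta_R|h|^{p-2}h^{ij}(Q_0)_{ij}$ into this good gradient integral. Exploiting $\nabla\sqrt{\eta_R}\in C^\infty_c$ (so that $\eta_R^{-1}|\nabla\eta_R|^2\leq C/R^2$ is supported on $B_{2R}\setminus B_R$), Cauchy--Schwarz and Young's inequality bound the boundary term by $\tfrac{p}{4}\int\eta_R|h|^{p-2}|\nabla h|^2+\tfrac{Cp}{R^2}\int_{B_{2R}\setminus B_R}|h|^p$. For the $Q_0$-term, $|Q_0|\leq C|\nabla h|^2$ and $|h|\leq\tilde\epsilon$ give a bound of order $Cp\tilde\epsilon\int\eta_R|h|^{p-2}|\nabla h|^2$, absorbable once $\tilde\epsilon$ is small independently of $p$. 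Discarding the remaining nonnegative gradient term yields the claimed inequality with $C_1(1+p)\geq Cp$.

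The main obstacle is obtaining \emph{linear} rather than quadratic dependence on $p$. A naive approach, testing $(\partial_t-\Delta)h = Q_0+\nabla Q_1$ directly against $p|h|^{p-2}h\,\eta_R$ and integrating by parts the $\nabla Q_1$ term, generates a contribution of order $Cp(p-1)\tilde\epsilon\int\eta_R|h|^{p-2}|\nabla h|^2$, arising from $|\nabla(|h|^{p-2}h)|\leq(p-1)|h|^{p-2}|\nabla h|$ paired with $|Q_1|\leq C|h||\nabla h|$. Such a term cannot be absorbed by the good bound $p\int\eta_R|h|^{p-2}|\nabla h|^2$ for a fixed $\tilde\epsilon$ as $p\to\infty$. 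The divergence-form reformulation bypasses this by packaging the two integrations by parts (of $\Delta h$ and of $\nabla Q_1$) into a single positive quadratic form in $\nabla h$, cancelling the problematic $(p-1)$ factor structurally.
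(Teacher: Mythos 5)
Your argument is correct and reaches the same conclusion by essentially the same mechanism, but your bookkeeping differs from the paper's in a way worth noting. You fold $\Delta h$ and $\nabla Q_1[h]$ into a single divergence $\nabla_a\bigl((\delta+h)^{ab}\nabla_b h\bigr)$ before testing, so that after one integration by parts the quadratic form $(\delta+h)^{ab}$ appears directly in both the good term $-p\int\eta_R|h|_\delta^{p-2}\nabla_a h^{ij}(\delta+h)^{ab}\nabla_b h_{ij}$ and the $p(p-2)$-term $-p(p-2)\int\eta_R|h|_\delta^{p-4}\xi_a(\delta+h)^{ab}\xi_b$; positivity of $(\delta+h)^{ab}$ then gives the right signs at once. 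The paper instead expands $\Delta|h|_\delta^p$ explicitly, producing the term $p(2-p)|h|_\delta^{p-4}\langle\nabla_a h,h\rangle\langle\nabla_a h,h\rangle$ with the flat metric $\delta^{ab}$, and separately integrates by parts $\nabla Q_1$ to get the matching $-p(p-2)|h|_\delta^{p-4}\langle\nabla_a h,h\rangle\bigl((\delta+h)^{ab}-\delta^{ab}\bigr)\langle\nabla_b h,h\rangle$; adding these reconstitutes exactly your $(\delta+h)^{ab}$ form, so the cancellation is observed rather than structural. Both organizations yield the linear-in-$p$ constant and require the same smallness of $\tilde\epsilon$, and both use the same regularization $|h|_\delta=\sqrt{\langle h,h\rangle+\delta}$ to make sense of $|h|^{p-4}$ near the zero set of $h$.

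One small correction to your closing paragraph: the route that separates $\Delta$ from $\nabla Q_1$ is not doomed to produce an unabsorbable $p(p-1)\tilde\epsilon$ term; that only happens if you estimate $|\nabla(|h|^{p-2}h)|\leq(p-1)|h|^{p-2}|\nabla h|$ by absolute values and pair it with $|Q_1|\leq C|h||\nabla h|$, throwing away the sign information. The paper keeps the $\nabla_a|h|_\delta^{p-2}$ piece intact as a quadratic form in $\langle\nabla h,h\rangle$ rather than taking its modulus, and the $p(p-2)$ contributions then combine with the correct sign. So the difference between the approaches is organizational clarity, not correctness of one versus the other; your divergence form simply makes the needed positivity manifest without having to carry the two pieces along and recombine them.

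Finally, a minor point of rigor: when you expand $\nabla_a(|h|^{p-2}h^{ij})$, you should be working throughout with the regularized quantity $(\varepsilon^2+|h|^2)^{p/2}$ as you mention, since otherwise $|h|^{p-4}$ is not integrable through the zero set of $h$ when $2\leq p<4$; with the regularization every step is legitimate and the limit $\varepsilon\downarrow0$ is handled by monotone/dominated convergence exactly as in the paper.
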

\begin{proof}
In the following we will denote by $C$ any constant depending on $n$ and $\eta$ only. We will also assume $\tilde{\epsilon} < \frac{1}{2}$. Let $\delta > 0$ and define
\begin{equation*}
|h|_{\delta} = \sqrt{\langle h, h \rangle + \delta}
\end{equation*} 
Then $|h|_{\delta} \in C^{2,1}(\R^n \times (0,\infty))$, $|h|_{\delta}\geq\delta$, $|h|_{\delta} > |h|$ and $|h|_{\delta} \rightarrow |h|$ pointwise as $\delta \rightarrow 0$. A simple calculation shows
\begin{equation}
\label{laplace-of-pnorm}
\Delta |h|_{\delta}^p = p(p-2)|h|_{\delta}^{p-4} \langle \nabla_a h, h \rangle \langle \nabla_a h, h \rangle + p |h|_{\delta}^{p-2} \left( \langle \nabla h, \nabla h \rangle + \langle \Delta h, h \rangle \right),
\end{equation}
where we sum over equal indices. Since $h$ satisfies the Ricci DeTurck equation (\ref{RdT-simplified}) we obtain
\begin{align}
 \partial_t |h|_{\delta} ^p  &= p |h|_{\delta}^{p-2} \langle \Delta h, h \rangle + p |h|_{\delta}^{p-2}\langle Q[h], h \rangle \\ 
 							  \label{inequ} &= \Delta |h|_{\delta}^p - p|h|_{\delta}^{p-2} |\nabla h|^2 + p(2-p)|h|_{\delta}^{p-4}\langle \nabla_a h, h \rangle \langle \nabla_a h, h \rangle 
 							  \\ \nonumber & \qquad\qquad+ p |h|_{\delta}^{p-2} \langle Q_0[h] + \nabla Q_1[h] , h \rangle,
 \end{align}
Therefore multiplying (\ref{inequ}) by $\eta_R$ and integrating by parts over $\R^n$, we obtain 
\begin{align}
\partial_t \int_{\R^n} \eta_R |h|^p_{\delta} \, \mathrm{d}x &= \int_{\R^n} \Delta \eta_R |h|^p_{\delta}  - \eta_R p|h|_{\delta}^{p-2} |\nabla h|^2 + \eta_R p(2-p)|h|_{\delta}^{p-4}\langle \nabla_a h, h \rangle \langle \nabla_a h, h \rangle \\
\nonumber& \qquad\qquad + p\eta_R |h|_{\delta}^{p-2} \langle Q_0[h] + \nabla Q_1[h] , h \rangle  \, \mathrm{d}x.
\end{align}
Recalling the definitions of $Q_0[h]$ and $Q_1[h]$ in (\ref{Q0}) and (\ref{Q1}) respectively, we see that
\begin{align}
Q_0[h] &= (g_{eucl} + h)^{-1}\ast(g_{eucl} + h)^{-1}\ast \nabla h \ast \nabla h \\
Q_1[h]^a_{ij} &= \left( (g_{eucl} + h)^{ab} - g_{eucl}^{ab}\right) \nabla_b h_{ij}
\end{align}
By assumption $\norm{h}_{L^{\infty}(\R^n \times [0,T])} <  \frac{1}{2}$, so 
\begin{align}
\label{Q0-est}|Q_0[h]| &\leq C |\nabla h|^2 \\
\label{Q1-est}|Q_1[h]| &\leq C |h||\nabla h| 
\end{align}
Thus by Cauchy-Schwarz and integration by parts we have
\begin{align}
\partial_t \int_{\R^n} \eta_R |h|^p_{\delta} \, \mathrm{d}x &\leq \int_{\R^n} \Delta \eta_R |h|^p_{\delta} +p(C\tilde{\epsilon}-1) \eta_R p|h|_{\delta}^{p-2} |\nabla h|^2 + \eta_R p(2-p)|h|_{\delta}^{p-4}\langle \nabla_a h, h \rangle \langle \nabla_a h, h \rangle \\
\nonumber& \qquad\qquad - p  Q_1[h]^a_{ij}\nabla_a \left(h_{ij} \eta_R |h|_{\delta}^{p-2} \right )  \, \mathrm{d}x
\end{align}
Focussing on the last term in the integrand above we have 
\begin{align}
\int_{\R^n} -pQ_1[h]^a_{ij}\nabla_a \left(h_{ij} \eta_R |h|_{\delta}^{p-2} \right )  \, \mathrm{d}x &\leq \int_{\R^n} p|Q_1[h]| \left( |\nabla h| \eta_R |h|_{\delta}^{p-2}  + |h| |\nabla \eta_R| |h|_{\delta}^{p-2} \right) \\
\nonumber& \qquad\qquad  - pQ_1[h]^a_{ij} h_{ij} \eta_R \nabla_a |h|_{\delta}^{p-2}\, \mathrm{d}x  \\
\nonumber&\leq \int_{\R^n} Cp\eta_R|h||h|_{\delta}^{p-2}|\nabla h|^2  + Cp|\nabla \eta_R| |h|^2|h|_{\delta}^{p-2}|\nabla h| \\
\label{reason-p-greater-2}& \qquad  - \eta_R p(p-2) |h|_{\delta}^{p-4} \langle \nabla_a h , h \rangle \left( (g_{eucl} + h)^{ab} - g_{eucl}^{ab}\right) \langle \nabla_b h , h \rangle \, \mathrm{d}x \\
\nonumber& \leq \int_{\R^n} Cp\tilde{\epsilon}\eta_R|h|_{\delta}^{p-2}|\nabla h|^2  + Cp|\nabla \eta_R| |h|^2|h|_{\delta}^{p-2}|\nabla h| \\
\nonumber& \qquad  + \eta_R p(p-2) |h|_{\delta}^{p-4} \langle \nabla_a h , h \rangle \langle \nabla_a h , h \rangle \, \mathrm{d}x \\
\nonumber&\leq \int_{\R^n} Cp\tilde{\epsilon}\eta_R|h|_{\delta}^{p-2}|\nabla h|^2  + Cp\tilde{\epsilon}\left(\nabla \sqrt{\eta_R}\right)^2|h|^{p}_{\delta} \\
\nonumber& \qquad  + \eta_R p(p-2) |h|_{\delta}^{p-4} \langle \nabla_a h , h \rangle \langle \nabla_a h , h \rangle \, \mathrm{d}x, 
\end{align}
where in the last step we used Young's inequality to estimate
\begin{align}
|\nabla \eta_R| |h|^2|h|_{\delta}^{p-2}|\nabla h| &= \eta_R^{\frac{1}{2}} |\nabla h| |h|^{\frac{1}{2}} |h|^{\frac{p-2}{2}}_{\delta} \cdot \left(\frac{|\nabla \eta_R|}{\eta_R^{\frac{1}{2}}}\right) |h|^{\frac{3}{2}} |h|^{\frac{p-2}{2}}_{\delta} \\
\nonumber&\leq \frac{1}{2} \eta_R|\nabla h|^2 |h| |h|^{p-2}_{\delta} + \frac{1}{2}\left(2 \nabla \sqrt{\eta_R}\right)^2|h|^3|h|^{p-2}_{\delta} \\
\nonumber&\leq \frac{\tilde{\epsilon}}{2} \eta_R|\nabla h|^2|h|^{p-2}_{\delta} + 2\tilde{\epsilon}\left(\nabla \sqrt{\eta_R}\right)^2|h|^{p}_{\delta} 
\end{align}
Combining above inequalities we obtain 
\begin{align}
\partial_t \int_{\R^n} \eta_R |h|^p_{\delta} \, \mathrm{d}x &\leq \int_{\R^n} \left( \Delta \eta_R + C \tilde{\epsilon} p\left(\nabla \sqrt{\eta_R}\right)^2 \right) |h|^{p}_{\delta} +p(C\tilde{\epsilon}-1) \eta_R |h|_{\delta}^{p-2} |\nabla h|^2 \, \mathrm{d}x \\
\nonumber& \leq \frac{C(1+p)}{R^2} \int_{B_{2R}\setminus B_{R}} |h|^{p}_{\delta} \mathrm{d}x 
\end{align}
for $\tilde{\epsilon} < \frac{1}{C}$. After integrating with respect to $t$ and taking the limit $\delta \rightarrow 0$, we see that the desired inequality holds true. 
\end{proof}

\begin{remark}
The reason why above lemma only holds for $2\leq p<\infty$ is because the term $ - \eta_R p(p-2) |h|_{\tilde{\epsilon}}^{p-4} \langle \nabla_a h , h \rangle (g_{eucl} + h)^{ab} \langle \nabla_b h , h \rangle$ in line (\ref{reason-p-greater-2}) is negative only for $p\geq2$.
\end{remark}

Now we can use Grönwall's Lemma to prove our theorem in the $p\geq 2$ case. 

\begin{proof}[Proof of theorem \ref{main-lp-thm} for $p\geq 2$]
Choose $\tilde{\epsilon}>0$ and $C_1$ such that lemma \ref{local-lp-bound-p-greater-2} holds and consider the following quantity
\begin{equation}
A(t,R) = \sup_{x \in \R^n} \int_{\R^n} \eta_{R,x}(y) h^p(y,t) \, \mathrm{d}y.
\end{equation}
Centering (\ref{estimate-p-greater-2}) at $x$ and integrating with respect to time we obtain
\begin{equation}
\int_{\R^n} \eta_{R,x} h^p(y,t) \,\mathrm{d}y \leq \int_{\R^n} \eta_{R,x} h^p(y,0) \, \mathrm{d}y + \frac{C_1(1+p)}{R^2}\int_0^t \int_{B_{2R}(x) \setminus B_{R}(x)} h^p (y,t) \, \mathrm{d}y \, \mathrm{d}t. 
\end{equation}
Because we can cover $B_{2R}(x) \setminus B_{R}(x)$ by a finite number $N$ of balls of radius $R$ and furthermore this number only depends on the dimension $n$, we obtain
\begin{equation}
A(t,R) \leq A(0,R) + \frac{NC_1(1+p)}{R^2} \int_0^t A(t,R) \, \mathrm{d}t.
\end{equation}
By Grönwall's inequality (see appendix) we deduce
\begin{equation}
A(t,R) \leq A(0,R) \exp\left(\frac{NC_1(1+p)}{R^2} t\right).
\end{equation}
Taking the limit $R \rightarrow \infty$ shows
\begin{equation}
\norm{ h(\cdot, t) }_{L^p(\R^n)} \leq \norm{ h(\cdot, 0) }_{L^p(\R^n)}
\end{equation}
\end{proof}

Below we generalize to the $1\leq p < 2$ case. For this we will need the following $L^p$ estimate, which is analog to the one stated in lemma \ref{local-lp-bound-p-greater-2}.

\begin{lem}
\label{local-lp-bound-p-less-2}
There exists a $C_2>0$ such that for a solution $h \in C^{2,1}(\R^n \times [0,\infty))$ with $\norm{h}_{L^{\infty}(\R^n \times [0,\infty))} <  \frac{1}{2}$ to the Ricci DeTurck equation (\ref{RdT-simplified}) the following estimate holds in the barrier sense for $1 \leq p \leq 2 $ 
\begin{equation}
\label{lp-less-2-estimate}
\partial_t \int_{\R^n} \eta_R |h|^p \leq \frac{C_2}{R^2} \int_{B_{2R}\setminus B_{R}} |h|^p \, \mathrm{d}x + C_2 \int_{\R^n} \eta_R|\nabla h|^2 \,\mathrm{d}x,
\end{equation}
where $C_2$ depends on $n$ and $\eta$ only.
\end{lem}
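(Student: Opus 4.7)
The plan is to imitate the proof of Lemma \ref{local-lp-bound-p-greater-2}, but to accept $\int \eta_R |\nabla h|^2$ on the right-hand side rather than try to eliminate it. This is what changes in the $p<2$ regime: the sign in the identity
$$\partial_t |h|_\delta^p = \Delta |h|_\delta^p - p|h|_\delta^{p-2}|\nabla h|^2 + p(2-p)|h|_\delta^{p-4}\langle \nabla_a h,h\rangle\langle \nabla_a h,h\rangle + p|h|_\delta^{p-2}\langle Q_0[h] + \nabla Q_1[h],h\rangle$$
makes the $p(2-p)$ term nonnegative, and likewise the corresponding $(p-2)$ term coming from integrating $\nabla Q_1$ by parts can no longer be used with a favorable sign; instead one must simply control everything up to a constant times $\int\eta_R|\nabla h|^2$, which the statement of the lemma permits.

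First I would regularize with $|h|_\delta = \sqrt{\langle h,h\rangle + \delta}$ and multiply the identity above by $\eta_R$, integrating by parts on both $\Delta|h|_\delta^p$ and on $\nabla Q_1$. The algebraic key is the pointwise bound $|\langle\nabla_a h,h\rangle|^2 \leq |\nabla_a h|^2 |h|_\delta^2$, which gives
$$-p|h|_\delta^{p-2}|\nabla h|^2 + p(2-p)|h|_\delta^{p-4}\langle \nabla_a h,h\rangle\langle \nabla_a h,h\rangle \leq p(1-p)|h|_\delta^{p-2}|\nabla h|^2 \leq 0$$
for $1 \leq p \leq 2$, so these two terms can simply be dropped. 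This is the one place in the argument where $p \geq 1$ is actually used in a nontrivial way.

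Next, I would estimate the remaining contributions. Using $|Q_0[h]|\leq C|\nabla h|^2$ and $|Q_1[h]| \leq C|h||\nabla h|$ together with $|h| \leq |h|_\delta$ and the boundedness of $|h|_\delta$ on the support of $\eta_R$, every term of the form $p|h|_\delta^{p-2}|h||\nabla h|^2$ or $p(p-2)|h|_\delta^{p-4}|h|^3|\nabla h|^2$ reduces to $|h|_\delta^{p-1}|\nabla h|^2$ times a constant (finite for $p \geq 1$ uniformly in $\delta$), and hence is controlled by $C\int \eta_R |\nabla h|^2$. For the one term involving $\nabla\eta_R$ (from integration by parts on $\nabla Q_1$) I would use Young's inequality together with the pointwise estimate $|h|^2|h|_\delta^{p-2} \leq |h|_\delta^p$ to produce the combination
$$\int \eta_R |\nabla h|^2\, dx + \int \frac{|\nabla\eta_R|^2}{\eta_R} |h|_\delta^p\, dx,$$
and the second integrand is bounded by $C R^{-2}$ times the characteristic function of $B_{2R}\setminus B_R$ by the design of $\eta$ (with $\sqrt{\eta}\in C^\infty_c$). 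Adding this to the $\int \Delta\eta_R |h|_\delta^p$ contribution and letting $\delta\to 0$ yields (\ref{lp-less-2-estimate}).

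The step I expect to be most delicate is the uniform-in-$\delta$ bookkeeping: several intermediate quantities contain factors $|h|_\delta^{p-k}$ with negative exponent that a priori blow up as $\delta\to 0$ wherever $h=0$. The point of pairing each such factor with the $|h|$ (or $|h|^2$) that comes from the quadratic structure of $Q_0$, $Q_1$ is precisely to cancel these negative powers down to an exponent of at least $p-1 \geq 0$, so that the bounds pass to the limit. If $p$ were strictly less than $1$ this cancellation would fail, which is what restricts the estimate to $p \geq 1$.
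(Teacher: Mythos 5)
Your proposal is correct and follows essentially the same route as the paper's proof: regularize with $|h|_\delta$, use Cauchy–Schwarz together with $|h|\leq|h|_\delta$ and $1\leq p\leq 2$ to drop the combined gradient terms (which is the only place the restriction $p\geq 1$ enters), integrate $\nabla Q_1$ by parts against $\eta_R p|h|_\delta^{p-2}h$, absorb the $\nabla\eta_R$ term by Young's inequality against $\eta_R|\nabla h|^2$, and observe that the negative powers of $|h|_\delta$ pair with the factors of $|h|$ from $Q_0,Q_1$ to give nonnegative exponents uniformly in $\delta$. This is exactly the paper's argument, including the final point about uniformity in $\delta$.
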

\begin{remark}
The main difficulty in proving theorem \ref{main-lp-thm} for $1\leq p <2$ will be to bound $|\nabla h|^2$ in the above estimate.
\end{remark}

\begin{proof} Applying Cauchy-Schwarz to (\ref{inequ}) we obtain 
\begin{align}
 \partial_t |h|_{\delta} ^p  &\leq \Delta |h|_{\delta}^p +  p |h|_{\delta}^{p-2} \langle Q[h], h \rangle + | \nabla h|^2|h|_{\delta}^{p-4} \left ( p(2-p)|h|^2-p |h|_{\delta}^2 \right) \\
 							   &\leq \Delta |h|_{\delta}^p + p |h|_{\delta}^{p-2} \langle Q[h], h \rangle. 
\end{align}
Multiplying by the cut-off function $\eta_R$, integrating by parts and discarding boundary terms, we deduce
\begin{align}
\partial_t \int_{\R^n} \eta_R|h|_{\delta}^p &\leq \int_{\R^n} \left( \Delta \eta_R \right)|h|_{\delta}^p +  \eta_R p |h|_{\delta}^{p-2} \langle Q_0[h]+\nabla Q_1[h], h \rangle \, \mathrm{d}x \\ \nonumber
&\leq \int_{\R^n} \left( \Delta \eta_R \right)|h|_{\delta}^p + \eta_R p |h|_{\delta}^{p-2} |h| |Q_0[h]| -  \langle Q_1[h], \nabla \left (\eta_R p |h|_{\delta}^{p-2} h \right) \rangle \, \mathrm{d}x \\ \nonumber
&\leq \int_{\R^n} \left( \Delta \eta_R \right)|h|_{\delta}^p + \eta_R p |h|_{\delta}^{p-2} |h| |Q_0[h]| + |Q_1[h]|\Big( p |\nabla \eta_R| |h|_{\delta}^{p-2} |h| \\ \nonumber
&\qquad\qquad \qquad + p \eta_R |h|_{\delta}^{p-2}|\nabla h| + \eta_R|p(p-2)||h|_{\delta}^{p-4}|h|^2|\nabla h| \Big) \, \mathrm{d}x. \nonumber
\end{align}
Recalling the estimates of $Q_0[h]$ and $Q_1[h]$ in (\ref{Q0-est}) and (\ref{Q1-est}) respectively, we see that
\begin{align}
\partial_t \int_{\R^n} \eta_R|h|_{\delta}^p  &\leq  \int_{\R^n} ( \Delta \eta_R )|h|_{\delta}^p + C \eta_R |h|_{\delta}^{p-2} |h| |\nabla h|^2 \\ \nonumber
& \qquad\qquad + C |\nabla \eta_R| |h|_{\delta}^{p-2} |h|^2|\nabla h| + C \eta_R  |h|_{\delta}^{p-4}|h|^3|\nabla h|^2  \, \mathrm{d}x\nonumber \\
&\leq \int_{\R^n} \left( \Delta \eta_R + 2\left(\nabla \sqrt{\eta_R}\right)^2 \right)|h|_{\delta}^p + C \eta_R |\nabla h|^2 \, \mathrm{d}x\\
& \leq \frac{C}{R^2}\int_{B_{2R}\setminus B_{R}} |h|^p_{\delta} \, \mathrm{d}x + C \int_{\R^n} \eta_R |\nabla h|^2\, \mathrm{d}x,
\end{align}
where we applied Young's inequality 
\begin{equation}
|\nabla \eta_R | |\nabla h| \leq \frac{1}{2} \eta_R |\nabla h|^2 + 2\left(\nabla \sqrt{\eta_R}\right)^2
\end{equation}
and used the assumption that $|h| < \frac{1}{2}$. Taking $\delta \rightarrow 0$ we obtain the desired result.

\end{proof}

We will now estimate $\norm{\nabla h}_{L^{2}(\R^n \times (0,\infty))}$ in terms of $\norm{h_0}_{L^2(\R^n)}$. By comparison to the standard heat equation $\partial_t u = \Delta u$ on $\R^n$ we expect such an estimate to exist: Multiplying the heat equation by $u$ we have
\begin{equation}
\frac{1}{2} \partial_t u^2 = u\Delta u
\end{equation}
Thus integrating by parts and rearranging we have
\begin{align}
\int_0^t \int_{\R^n} |\nabla u |^2 \,\mathrm{d}x \,\mathrm{d}t &= \frac{1}{2} \int_{\R^n} u(\cdot,0)^2 \,\mathrm{d}x- \frac{1}{2} \int_{\R^n} u(\cdot, t)^2 \,\mathrm{d}x\\
&\leq \frac{1}{2} \int_{\R^n} u(\cdot,0)^2 \,\mathrm{d}x.
\end{align}

For the Ricci DeTurck equation (\ref{RdT-simplified}) we can perform an analogous computation to deduce:
\begin{lem}
\label{local-int-L2-bound}
Let $\frac{1}{2} > \tilde{\epsilon} >0$. Then for a solution $h \in C^{2,1}(\R^n \times [0,\infty))$ to the Ricci DeTurck equation (\ref{RdT-simplified}) satisfying $\norm{h}_{L^{\infty}(\R^n \times [0,\infty))} <  \tilde{\epsilon}$ we have
\begin{equation}
\label{int-L2-bound}
\frac{1}{2} \partial_t \int_{\R^n} \eta_R |h|^2 \, \mathrm{d}x\leq (-\frac{1}{2} + C_3\tilde{\epsilon}) \int_{\R^n} \eta_R |\nabla h|^2 \, \mathrm{d}x+ \frac{C_3}{R^2}\int_{B_{2R}\setminus B_{R}} |h|^2 \, \mathrm{d}x,
\end{equation}
where $C_3$ depends on $n$ only.
\end{lem}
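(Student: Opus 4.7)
The plan is to adapt the computation of lemma \ref{local-lp-bound-p-less-2} specialized to $p = 2$, but to track more carefully the coefficient in front of $\int \eta_R |\nabla h|^2$ so that, for $\tilde\epsilon$ sufficiently small, it remains strictly negative. This strict negativity is what will later allow the inequality to be integrated in time to yield the $L^2(\R^n\times(0,\infty))$ bound on $\nabla h$. Note that no regularization of $|h|$ is needed here, since $|h|^2 = \langle h, h \rangle$ is already smooth in $h$.

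I would test the Ricci DeTurck equation (\ref{RdT-simplified}) against $\eta_R \cdot h$ (in the tensor inner product) and integrate over $\R^n$. The time derivative yields $\tfrac{1}{2}\partial_t \int \eta_R |h|^2$ on the left. Integration by parts on the Laplacian term produces
\begin{equation*}
\int_{\R^n} \eta_R \langle h, \Delta h \rangle \, \mathrm{d}x = -\int_{\R^n} \eta_R |\nabla h|^2 \, \mathrm{d}x - \int_{\R^n} \nabla\eta_R \cdot \langle h, \nabla h\rangle \, \mathrm{d}x.
\end{equation*}
The cross term is handled by writing $|\nabla\eta_R| = 2\sqrt{\eta_R}\,|\nabla \sqrt{\eta_R}|$ and applying Young's inequality
\begin{equation*}
2\sqrt{\eta_R}\,|\nabla\sqrt{\eta_R}|\,|h|\,|\nabla h| \leq \tfrac{1}{2}\, \eta_R |\nabla h|^2 + 2 |\nabla\sqrt{\eta_R}|^2 |h|^2,
\end{equation*}
which absorbs exactly half of the good term and leaves a boundary contribution bounded by $\tfrac{C}{R^2}\int_{B_{2R}\setminus B_R}|h|^2$, using that $\nabla\sqrt{\eta_R}$ is supported in $B_{2R}\setminus B_R$ and satisfies $|\nabla\sqrt{\eta_R}| \leq c/R$. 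This produces the crucial residual $-\tfrac{1}{2}\int \eta_R|\nabla h|^2$.

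For the nonlinear terms, (\ref{Q0-est}) gives $|\langle h, Q_0[h] \rangle| \leq C|h|\,|\nabla h|^2 \leq C\tilde\epsilon|\nabla h|^2$, which contributes $C\tilde\epsilon \int \eta_R|\nabla h|^2$. For the $\nabla Q_1$ term, one further integration by parts gives an interior piece $-\int \eta_R \langle \nabla h, Q_1[h]\rangle$ and a cut-off piece $-\int \nabla\eta_R \cdot \langle h, Q_1[h]\rangle$; applying (\ref{Q1-est}) together with $|h| < \tilde\epsilon$ yields $|Q_1[h]| \leq C\tilde\epsilon|\nabla h|$, after which another use of Young's inequality in the cut-off piece contributes a total of $C\tilde\epsilon \int \eta_R |\nabla h|^2 + \tfrac{C\tilde\epsilon}{R^2}\int_{B_{2R}\setminus B_R}|h|^2$. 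Assembling all pieces gives the stated inequality with a universal constant $C_3 = C_3(n, \eta)$.

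The only delicate point is the splitting of the Laplacian cross term: no $\tilde\epsilon$ is available there, so Young's inequality must be calibrated to absorb exactly a $\tfrac{1}{2}$-multiple of $\int \eta_R |\nabla h|^2$, no more and no less. Any other choice of splitting would fail to produce the strictly negative residual coefficient that the later $L^2$-type Grönwall argument needs; all remaining error terms carry an intrinsic $\tilde\epsilon$ factor and are therefore harmless once $\tilde\epsilon$ is small relative to $C_3$.
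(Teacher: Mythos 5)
Your proposal is correct and matches the paper's proof essentially step for step: test the equation against $\eta_R h$, integrate the Laplacian by parts, calibrate Young's inequality to absorb exactly $\tfrac{1}{2}\int\eta_R|\nabla h|^2$ from the cut-off cross term, and bound the $Q_0$ and $Q_1$ contributions by $C\tilde\epsilon$-multiples of $\int\eta_R|\nabla h|^2$ plus boundary terms. Your remark that no $\delta$-regularization of $|h|$ is needed here (since $|h|^2$ is already smooth) is a correct minor observation; the paper likewise does not regularize in this lemma.
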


\begin{proof}
Multiplying the Ricci DeTurck equation (\ref{RdT-simplified}) by $\eta_R h$ and integrating by parts we obtain
\begin{align}
\frac{1}{2} \partial_t \int_{\R^n} \eta_R |h|^2 \, \mathrm{d}x &= \int_{\R^n} -\nabla \left(\eta_R h\right) \nabla h +  \eta_R \langle h, Q_0[h] \rangle - \langle \nabla (\eta_R h), Q_1[h]\rangle \big) \, \mathrm{d}x \\
&\leq \int_{\R^n} -\eta_R |\nabla h|^2 + |\nabla \eta_R| |h| |\nabla h| + \eta_R |h| |Q_0[h]| \\
& \qquad\quad\qquad\qquad + \eta_R |\nabla h| |Q_1[h]| + |\nabla \eta_R||h| |Q_1[h]| \, \mathrm{d}x.
\end{align}
By Young's inequality we have
\begin{equation}
|\nabla \eta_R | |h| |\nabla h| = 2 |\nabla \sqrt{\eta_R}| |h| \sqrt{\eta_R}|\nabla h| \leq 2|\nabla \sqrt{\eta_R}|^2 |h|^2 + \frac{1}{2} \eta_R |\nabla h|^2.
\end{equation}
Applying estimates (\ref{Q0-est}) and (\ref{Q1-est}) for $Q_0[h]$ and $Q_1[h]$ respectively, we thus obtain
\begin{align}
\frac{1}{2} \partial_t \int_{\R^n} \eta_R |h|^2 \, \mathrm{d}x & \leq  \int_{\R^n} -\frac{1}{2} \eta_R |\nabla h|^2 + 2 |\nabla \sqrt{\eta_R}|^2 |h|^2 \\
\nonumber &\qquad\qquad+ C \eta_R |h| |\nabla h|^2 + C |\nabla\eta_R||h|^2|\nabla h| \, \mathrm{d}x \\
&\leq \int_{\R^n} (-\frac{1}{2} + C\tilde{\epsilon} ) \eta_R |\nabla h|^2 + 2 |\nabla \sqrt{\eta_R}|^2 |h|^2 \\
\nonumber &\qquad\qquad + C\tilde{\epsilon} \left( 2|\nabla \sqrt{\eta_R}|^2 |h|^2 + \frac{1}{2} \eta_R |\nabla h|^2 \right) \, \mathrm{d}x \\
&\leq(-\frac{1}{2} + C\tilde{\epsilon} )\int_{\R^n} \eta_R |\nabla h|^2 \, \mathrm{d}x+ \frac{C}{R^2} \int_{B_{2R}\setminus B_R} |h|^2\, \mathrm{d}x
\end{align}
\end{proof}

Using the above estimate we can bound the $L^2$ norm of $|\nabla h|$ over space and time. 
\begin{cor}
\label{gradient-h-l2-bound}
 There exists an $\frac{1}{2}>\tilde{\epsilon}>0$ such that under the same conditions as in theorem \ref{main-lp-thm} for the case $1 \leq p\leq2$ we have 
\begin{equation}
\int_0^{\infty} \int_{\R^n} |\nabla h|^2 \, \mathrm{d}x \leq 2 \int_{\R^n} |h_0|^p\, \mathrm{d}x.
\end{equation}
\end{cor}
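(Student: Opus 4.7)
The plan is to exploit the localized $L^2$ energy inequality (\ref{int-L2-bound}) from Lemma \ref{local-int-L2-bound}. I first choose $\tilde{\epsilon}>0$ small enough that $-\frac{1}{2}+C_3\tilde{\epsilon} \le -\frac{1}{4}$, which rearranges (\ref{int-L2-bound}) into
\begin{equation*}
\frac{1}{2}\partial_t\int_{\R^n}\eta_R|h|^2\,\mathrm{d}x + \frac{1}{4}\int_{\R^n}\eta_R|\nabla h|^2\,\mathrm{d}x \le \frac{C_3}{R^2}\int_{B_{2R}\setminus B_R}|h|^2\,\mathrm{d}x.
\end{equation*}
Integrating in time from $0$ to $T$ and discarding the non-negative contribution $\frac{1}{2}\int\eta_R|h(\cdot,T)|^2$ on the left-hand side yields
\begin{equation*}
\frac{1}{4}\int_0^T\!\!\int_{\R^n}\eta_R|\nabla h|^2\,\mathrm{d}x\,\mathrm{d}t \le \frac{1}{2}\int_{\R^n}\eta_R|h_0|^2\,\mathrm{d}x + \frac{C_3 T}{R^2}\sup_{0\le t\le T}\int_{B_{2R}\setminus B_R}|h|^2\,\mathrm{d}x.
\end{equation*}

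Next I need to control the tail term uniformly in $t$, which requires an $L^2$-in-space bound on $h$ that is uniform in time. Here I use that $h_0\in L^p\cap L^\infty$ with $1\le p \le 2$ and $\|h_0\|_{L^\infty}<\tilde{\epsilon}<1$; by pointwise interpolation,
\begin{equation*}
\int_{\R^n}|h_0|^2\,\mathrm{d}x \le \|h_0\|_{L^\infty}^{2-p}\int_{\R^n}|h_0|^p\,\mathrm{d}x \le \int_{\R^n}|h_0|^p\,\mathrm{d}x,
\end{equation*}
so $h_0\in L^2(\R^n)$. I then apply the already-established $p=2$ case of Theorem \ref{main-lp-thm} (which is exactly why the $p\ge 2$ range was proved first) to conclude $\|h_t\|_{L^2(\R^n)}\le\|h_0\|_{L^2(\R^n)}$ for all $t\ge 0$, whence the tail term is dominated by $C_3 T R^{-2}\|h_0\|_{L^2}^2$, which vanishes as $R\to\infty$ for each fixed $T$.

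Finally, sending $R\to\infty$ (using Fatou's lemma on the gradient integral and dominated convergence on the initial-data integral, with dominating function $|h_0|^2$) and then $T\to\infty$ gives
\begin{equation*}
\int_0^\infty\!\!\int_{\R^n}|\nabla h|^2\,\mathrm{d}x\,\mathrm{d}t \le 2\int_{\R^n}|h_0|^2\,\mathrm{d}x \le 2\int_{\R^n}|h_0|^p\,\mathrm{d}x,
\end{equation*}
which is the desired estimate. I do not anticipate a serious obstacle; the subtle step is the bootstrap through the $p=2$ case of Theorem \ref{main-lp-thm} that upgrades the $L^p$ hypothesis into the uniform-in-time $L^2$ control needed to kill the spatial tail.
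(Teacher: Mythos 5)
Your proposal matches the paper's argument essentially step for step: both integrate the localized $L^2$ energy inequality of Lemma \ref{local-int-L2-bound} in time, choose $\tilde{\epsilon}$ small enough that the gradient coefficient is at most $-\frac{1}{4}$, invoke the already-proved $p=2$ case of Theorem \ref{main-lp-thm} (combined with the pointwise interpolation $\int|h_0|^2 \le \int|h_0|^p$ valid since $|h_0|\le 1$ and $p\le 2$) to control the tail term uniformly in $t$, and send $R\to\infty$. The proof is correct.
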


\begin{proof} We will prove the case $p=2$ from which the result follows by the observation that
\begin{equation}
\int_{\R^n} |h_0|^2 \, \mathrm{d}x \leq \int_{\R^n} |h_0|^p \, \mathrm{d}x < \infty
\end{equation}
for $\norm{h}_{L^{\infty}(\R^n\times[0,\infty))}\leq1$ and $ 1\leq p \leq 2$. By theorem \ref{main-lp-thm} case $p=2$, which we proved above, we can pick $\tilde{\epsilon} >0$ such that
\begin{equation}
\int_{\R^n} |h_t|^2 \, \mathrm{d}x \leq \int_{\R^n} |h_0|^2 \, \mathrm{d}x
\end{equation}
for all $t>0$. Integrating the estimate from lemma \ref{local-int-L2-bound} we have
\begin{align}
\frac{1}{2} \int_{\R^n} \eta_R |h|^2(x,t)\, \mathrm{d}x -  \frac{1}{2} \int_{\R^n} \eta_R |h_0|^2\, \mathrm{d}x &\leq (-\frac{1}{2}+ C_3 \tilde{\epsilon}) \int_0^t \int_{\R^n} \eta_R |\nabla h|^2 \, \mathrm{d}x\\ \nonumber
&\qquad\qquad + \frac{C_3}{R^2}\int_0^t\int_{B_{2R}\setminus B_{R}} |h|^2 \, \mathrm{d}x.
\end{align}
Taking $\tilde{\epsilon} < \frac{1}{4C_3}$ and $R\rightarrow \infty$, we obtain
\begin{equation}
\frac{1}{4} \int_0^t \int_{\R^n} |\nabla h|^2 \, \mathrm{d}x \leq \frac{1}{2} \int_{\R^n} |h_0|^2\, \mathrm{d}x.
\end{equation}
Since this inequality is independent of $t$ the desired result follows.
\end{proof}

Having control of $\nabla h$, we now proceed to proving theorem \ref{main-lp-thm} in the $1\leq p < 2$ case.

\begin{proof}[Proof of theorem (\ref{main-lp-thm}) for $1\leq p < 2$] Take $\tilde{\epsilon}>0$ small enough such that lemma \ref{local-lp-bound-p-less-2} and corollary \ref{gradient-h-l2-bound} hold. As previously, we define
\begin{equation}
A(t,R) = \sup_{x \in \R^n} \int_{\R^n} \eta_{R,x}(y) |h|^p(y,t) \, \mathrm{d}y.
\end{equation}
Integrating (\ref{lp-less-2-estimate}) and covering $B_{2R}(x)\setminus B_{R}(x)$ by $N$ balls of radius $R$ as before we obtain
\begin{equation}
A(t,R)- A(0,R) \leq \frac{NC_2}{R^2} \int_0^t A(s,R) \, \mathrm{d}s + C_2 \int_0^t \int_{\R^n}\eta_R |\nabla h|^2.
\end{equation}
Applying our $L^2(\R^n\times(0,\infty))$ bound of $|\nabla h|$ from corollary \ref{gradient-h-l2-bound} and rearranging we deduce
\begin{equation}
A(t,R) \leq (2C_2 +1) \norm{h_0}^p_{L^p(\R^n)} +\frac{NC_2}{R^2} \int_0^t A(s,R) \, \mathrm{d}s.
\end{equation}
By applying Grönwall's inequality (see appendix) and taking $R \rightarrow \infty$, we obtain the desired result.
\end{proof}

\subsection{$L^{\infty}$ decay rates} In the following we will utilize the $L^p$ bounds derived above to prove $L^{\infty}$ decay rates for solutions $h_t$ to the Ricci DeTurck flow (\ref{RdT-simplified}). The rates we obtain are consistent with what one would expect by comparison with the initial value problem for the standard linear heat equation $\partial_t u = \Delta u$ on $\R^n$:
Assuming we can represent the solution as 
\begin{equation}
u(x,t) = \int_{\R^n} K(x,t,y,0) u(y,0) \, \mathrm{d}y,
\end{equation}
where 
\begin{equation}
K(x,t,y,s) = (4\pi (t-s))^{-\frac{n}{2}} \exp\left(-\frac{|x-y|^2}{4(t-s)}\right)
\end{equation}
is the standard heat kernel on $\R^n$, we can use Young's inequality to estimate
\begin{align*}
\norm{h(\cdot, t)}_{L^{\infty}(\R^n)} &\leq \norm{K}_{L^{\frac{p}{p-1}}(\R^n)} \norm{u(\cdot, 0)}_{L^p(\R^n)} \\
&\leq \frac{C}{t^{\frac{n}{2p}}} \norm{u(\cdot, 0)}_{L^p(\R^n)}
\end{align*}
for some constant $C>0$. We will show that the same result holds for Ricci DeTurck flow:
\begin{lem}
\label{infty-decay-rates}
There exists constants $\epsilon'= \epsilon'(n)>0$ and $C'=C'(n)>0$ such that for a solution $h_t$ to the Ricci DeTurck equation (\ref{RdT-simplified}) satisfying (i) $\norm{h_0}_{L^{\infty}(\R^n)} < \epsilon'$ and (ii) $\norm{h_0}_{L^p(\R^n)} < \infty$ for some $1 \leq p < \infty$, we have 
\begin{equation}
\norm{h_t}_{L^{\infty}(\R^n)} <  \norm{h_0}_{L^p(\R^n)} \frac{C'}{t^{\frac{n}{2p}}}, \qquad t>0.
\end{equation}
\end{lem}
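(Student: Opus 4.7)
The plan is to iterate a Morrey-type interpolation inequality using the time-translation property of the flow, bootstrapping the $L^{\infty}$-decay rate up to the optimal exponent $n/(2p)$.

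First, I would establish a base interpolation: for any smooth $u$ on $\R^n$ with $\|\nabla u\|_{L^{\infty}(\R^n)} \le M$ and $\|u\|_{L^{p}(\R^n)} \le N$, evaluation near a near-maximum point of $|u|$ shows that $|u|$ stays above $\|u\|_{L^{\infty}}/2$ on a ball of radius $\sim \|u\|_{L^{\infty}}/M$; comparing with the $L^{p}$ hypothesis on this ball yields
\[
\|u\|_{L^{\infty}(\R^n)} \le C_{n}\, M^{n/(n+p)}\, N^{p/(n+p)}.
\]
Applied to $u = h_{t}$ with $M \le C\|h_{0}\|_{L^{\infty}}\,t^{-1/2}$ from the Koch--Lamm derivative bound (\ref{KL-decay-rates}) and $N \le C\|h_{0}\|_{L^{p}}$ from Theorem \ref{main-lp-thm}, this produces the initial Schnürer--Schulze--Simon-type estimate
\[
\|h_{t}\|_{L^{\infty}(\R^n)} \le C\,\|h_{0}\|_{L^{\infty}}^{n/(n+p)}\,\|h_{0}\|_{L^{p}}^{p/(n+p)}\,t^{-n/(2(n+p))},
\]
whose time exponent $\alpha_{0} = n/(2(n+p))$ is strictly smaller than the target $n/(2p)$.

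Next, I would bootstrap by time-translation. For $\tau \in (0,t)$, restart the Koch--Lamm solution on $[\tau,t]$ with $h_{\tau}$ as initial data (admissible because the uniform Koch--Lamm bound $\|h_{s}\|_{L^{\infty}} \le C\epsilon'$ preserves the smallness hypothesis for $\epsilon'$ small); this yields $\|\nabla h_{t}\|_{L^{\infty}} \le C\|h_{\tau}\|_{L^{\infty}}(t-\tau)^{-1/2}$. Choosing $\tau = t/2$ and feeding the current best bound on $\|h_{t/2}\|_{L^{\infty}}$ back into the Morrey interpolation produces, under the ansatz $\|h_{t}\|_{L^{\infty}} \le C_{k}\,\|h_{0}\|_{L^{\infty}}^{a_{k}}\,\|h_{0}\|_{L^{p}}^{b_{k}}\,t^{-\alpha_{k}}$, the coupled affine recursion
\[
\alpha_{k+1} = \tfrac{n}{n+p}\bigl(\alpha_{k} + \tfrac{1}{2}\bigr), \qquad a_{k+1} = \tfrac{n}{n+p}\,a_{k}, \qquad b_{k+1} = \tfrac{n}{n+p}\,b_{k} + \tfrac{p}{n+p},
\]
together with $C_{k+1} = C_{*}\,C_{k}^{n/(n+p)}$. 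Each is a strict contraction with ratio $n/(n+p) < 1$, and the unique fixed points are $\alpha_{\infty} = n/(2p)$, $a_{\infty} = 0$, $b_{\infty} = 1$, with a bounded limiting constant $C_{\infty}$. Passing to the limit $k\to\infty$ gives the claimed bound $\|h_{t}\|_{L^{\infty}(\R^n)} \le C'\,t^{-n/(2p)}\,\|h_{0}\|_{L^{p}(\R^n)}$.

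The main obstacle is the bookkeeping of the triple $(\alpha_{k}, a_{k}, b_{k})$ and of the constants $C_{k}$ through the iteration, and the verification that the smallness hypothesis required to reapply the Koch--Lamm estimate from time $t/2$ is preserved at every step. The latter is automatic from the uniform bound $\|h_{s}\|_{L^{\infty}} \le C\epsilon'$ established in Section \ref{section-KL}, while the former reduces to an explicit fixed-point analysis of an affine recursion whose contraction ratio $n/(n+p)$ is strictly less than one, ensuring that constants do not blow up along the iteration.
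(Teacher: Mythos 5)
Your proposal is correct and matches the paper's proof essentially step for step: the same Morrey-type interpolation (Lemma \ref{interpolation_inequality}), the same base estimate obtained by pairing it with the Koch--Lamm derivative bound (\ref{KL-decay-rates}) and the $L^p$ conservation from Theorem \ref{main-lp-thm}, and the same time-translation bootstrap from $t/2$ leading to a contracting affine recursion with fixed point $n/(2p)$. The only cosmetic difference is that you track the exponent $a_k$ of $\norm{h_0}_{L^\infty}$ separately, whereas the paper absorbs $\norm{h_0}_{L^\infty}\le\epsilon'$ into the constant $C_i$; both lead to the same limiting bound.
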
 

Before we prove above lemma we will state a useful interpolation inequality.
\begin{lem}
\label{interpolation_inequality}
Let $f \in L^p(\R^n)$ for some $p \in [1,\infty)$ and $Df \in L^{\infty}(\R^n)$. Then there exists a constant $C(n,p)>0$ such that
\begin{equation}
\norm{f}_{L^{\infty}(\R^n)} \leq C(n,p) \norm{f}^{\frac{p}{n+p}}_{L^p(\R^n)} \norm{Df}^{\frac{n}{n+p}}_{L^{\infty}(\R^n)}.
\end{equation}
In particular, we can choose $C(n,p) = \left(\frac{(p+1) \cdots (p+n)}{\omega_n n!}\right)^{\frac{1}{n+p}}$.
\end{lem}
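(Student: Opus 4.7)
The plan is to combine the Lipschitz lower bound on $|f|$ coming from $Df \in L^\infty$ with a direct evaluation of the resulting integral over a ball. Fix $x_0 \in \R^n$ and set $M_0 = |f(x_0)|$, $L = \|Df\|_{L^\infty(\R^n)}$. Since $|f(x)| \geq M_0 - L|x-x_0|$ for every $x$ by the mean value inequality, and in particular this lower bound is nonnegative on the ball $B_{M_0/L}(x_0)$, we obtain
$$\|f\|_{L^p(\R^n)}^p \geq \int_{B_{M_0/L}(x_0)} \bigl(M_0 - L|x-x_0|\bigr)^p \, dx.$$

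The main step is then the exact evaluation of the right-hand side. Passing to spherical coordinates around $x_0$, whose surface measure is $n\omega_n s^{n-1}\, ds$, and substituting $u = M_0 - Ls$ yields
$$\int_{B_{M_0/L}(x_0)} \bigl(M_0 - L|x-x_0|\bigr)^p\, dx = \frac{n\omega_n}{L^n} \int_0^{M_0} u^p (M_0 - u)^{n-1}\, du.$$
The integral on the right is a standard Beta integral, equal to $B(p+1, n)\, M_0^{p+n} = \frac{\Gamma(p+1)\,\Gamma(n)}{\Gamma(p+n+1)}\, M_0^{p+n}$, and telescoping the Gamma ratio via $\Gamma(p+n+1) = (p+n)(p+n-1)\cdots(p+1)\Gamma(p+1)$ simplifies the prefactor to $\frac{\omega_n \, n!}{(p+1)(p+2)\cdots(p+n)}$. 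This gives the clean identity
$$\int_{B_{M_0/L}(x_0)} \bigl(M_0 - L|x-x_0|\bigr)^p\, dx = \frac{\omega_n\, n!}{(p+1)(p+2)\cdots(p+n)}\cdot \frac{M_0^{p+n}}{L^n}.$$

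Combining the two displays and rearranging yields
$$M_0^{p+n} \leq \frac{(p+1)(p+2)\cdots(p+n)}{\omega_n \, n!}\, \|Df\|_{L^\infty(\R^n)}^n\, \|f\|_{L^p(\R^n)}^p.$$
Since $f \in L^p$ with $Df \in L^\infty$ is automatically bounded (otherwise the linear lower bound would force $\|f\|_{L^p} = \infty$), one can take the supremum of $M_0$ over $x_0 \in \R^n$, or equivalently apply the inequality along a maximizing sequence, and extract the $(p+n)$-th root to arrive at the claimed bound with exactly $C(n,p) = \bigl(\frac{(p+1)\cdots(p+n)}{\omega_n n!}\bigr)^{1/(n+p)}$. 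There is no real obstacle here: the Lipschitz step is elementary, and the only substantive computation is the Beta-integral evaluation that produces the sharp constant.
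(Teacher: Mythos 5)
Your proof is correct and takes essentially the same route as the paper: compare $|f(x_0)|$ to the linear lower bound $|f(x_0)|-\norm{Df}_{L^\infty}|x-x_0|$ on the ball where it stays nonnegative, pass to spherical coordinates, and evaluate the resulting Beta integral to extract the sharp constant. The only cosmetic difference is your choice of substitution ($u = M_0 - Ls$ versus the paper's $r = r_{x_0}s$), which lands on the same Gamma-ratio computation.
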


\begin{proof}
Let $x_0 \in \R^n$ and w.l.o.g. assume $f(x_0) > 0$. If we set $r_{x_0} = \frac{f(x_0)}{\norm{Df}_{L^{\infty}(\R^n)}}$, then for any $x$ such that $|x-x_0| \leq r_{x_0}$ we have
\begin{equation}
|f(x)| \geq \left| f(x_0) - \norm{Df}_{L^{\infty}(\R^n)} |x-x_0| \right|.
\end{equation}
Integrating we get
\begin{align}
\int_{\R^n} |f(x)|^p \, \mathrm{d}^nx &\geq \int_{|x-x_0| \leq r_{x_0}} \left| f(x_0) - \norm{Df}_{L^{\infty}(\R^n)} |x-x_0| \right|^p \, \mathrm{d}^nx \\ \nonumber
 &\geq n\omega_n \int_{0}^{r_{x_0}} r^{n-1} \left( f(x_0) - \norm{Df}_{L^{\infty}(\R^n)}r \right)^p \, \mathrm{d}r \\ \nonumber
 &\geq n\omega_n r_{x_0}^n f(x_0)^p \left ( \int_0^1 s^{n-1}(1-s)^p \, \mathrm{d}s \right),
\end{align}
where in the last line we used the substitution $r = r_{x_0} s$ and $\omega_n$ denotes the volume of the unit ball in $n$ dimensions. After evaluating the integral, re-substituting for $r_{x_0}$ and rearranging the last inequality, we obtain the desired result.
\end{proof}

We can now prove lemma \ref{infty-decay-rates}.

\begin{proof}[Proof of lemma \ref{infty-decay-rates}] Choose $\epsilon >0, C>0$ as in section \ref{section-KL} and $\tilde{\epsilon}>0, \tilde{C}>0$ as in theorem \ref{main-lp-thm}. Take $\epsilon' =\frac{1}{C} \min(\tilde{\epsilon}, \epsilon)$. Then by estimate (\ref{KL-decay-rates}) we have $\norm{h_t}_{L^{\infty}(\R^n)} < \min(\tilde{\epsilon}, \epsilon)$ for all $t\geq0$. Thus we can apply theorem \ref{main-lp-thm} in conjunction with lemma \ref{interpolation_inequality} above to deduce
\begin{equation}
\label{interpolation}
\norm{h_t}_{L^{\infty}(\R^n)} \leq C(n,p) \tilde{C}^{\frac{p}{n+p}} \norm{h_0}_{L^{p}(\R^n)}^{\frac{p}{n+p}}  \norm{D h_t}_{L^{\infty}(\R^n)}^{\frac{n}{n+p}},
\end{equation}
By the decay estimate (\ref{KL-decay-rates}) it follows that for $t>0$
\begin{equation}
\label{deriv_decay}
\sup_{x\in \R^n} |D h_t(x) | \leq \frac{CR}{t^{\frac{1}{2}}} \norm{h_0}_{L^{\infty}(\R^n)}.
\end{equation}
Combining the above inequalities we therefore obtain
\begin{equation}
\norm{h_t}_{L^{\infty}(\R^n)} \leq C(n,p) \tilde{C}^{\frac{p}{n+p}}(CR\epsilon')^{\frac{n}{n+p}} \norm{h_0}_{L^{p}(\R^n)}^{\frac{p}{n+p}} t^{-\frac{n}{2(n+p)}}.
\end{equation}
 As noted above $\norm{h_t}_{L^{\infty}(\R^n)}<\epsilon$ for all $t\geq0$, so we may start the flow from $\frac{t}{2}$ and reapply the estimate (\ref{KL-decay-rates})
\begin{equation}
\label{recursion}
|Dh_t(x)| \leq \frac{CR}{\left(\frac{t}{2}\right)^{\frac{1}{2}}} \norm{h_{\frac{t}{2}}}_{L^{\infty}(\R^n)}.
\end{equation}
Using (\ref{recursion}) and (\ref{interpolation}) recursively we can successively obtain better decay rates for $h$. Let $(C_i,\alpha_i, \beta_i)$ be such that after the $i$-th iteration we have 
\begin{equation}
|h_t(x)| \leq C_i t^{\alpha_i} \norm{h_0}_{L^{p}(\R^n)}^{\beta_i}.
\end{equation} 
Then 
\begin{equation}
(C_1,\alpha_1, \beta_1) = \left(C(n,p) \tilde{C}^{\frac{p}{n+p}}(CR\epsilon')^{\frac{n}{n+p}}, -\frac{n}{2(n+p)}, \frac{p}{n+p} \right)
\end{equation}
and at the $i$-th recursion step we obtain
\begin{align}
|Dh_t(x)| &\leq \frac{CR} { (\frac{t}{2})^{\frac{1}{2}} } \norm{h_{\frac{t}{2}}}_{L^{\infty}(\R^n)} \\
					\nonumber&\leq \frac{2^{\frac{1}{2}} RC}{t^{\frac{1}{2}}} C_i \left(\frac{t}{2}\right)^{\alpha_i}\norm{h_0}_{L^{p}(\R^n)}^{\beta_i} \\
					\nonumber&\leq RCC_i 2^{\frac{1}{2}-\alpha_i} t^{\alpha_i - \frac{1}{2}}\norm{h_0}_{L^{p}(\R^n)}^{\beta_i}
\end{align}
and therefore 
\begin{equation}
\norm{h_t}_{L^{\infty}(\R^n)} \leq C(n,p)\tilde{C}^{\frac{p}{n+p}} \left(2^{\frac{1}{2}-\alpha_i}RCC_i\right)^{\frac{n}{n+p}} t^{\frac{n}{n+p}(\alpha_i - \frac{1}{2})} \norm{h_0}_{L^{p}(\R^n)}^{\frac{p}{n+p} + \beta_i \frac{n}{n+p}}.
\end{equation}
From above we can read off the following recursion relations for $\alpha_i$, $\beta_i$ and $C_i$: 
\begin{align}
C_{i+1} &= C(n,p)\tilde{C}^{\frac{p}{n+p}} (2^{\frac{1}{2}-\alpha_i}RCC_i)^{\frac{n}{n+p}}  \\
\alpha_{i+1} &= \frac{n}{n+p}\left(\alpha_i - \frac{1}{2}\right) \\
\beta_{i+1} &= \frac{p}{n+p} + \beta_i \frac{n}{n+p}
\end{align}
As $i \rightarrow \infty$ we have
\begin{align}
\alpha_i &\rightarrow -\frac{n}{2p}\\
\beta_i &\rightarrow 1\\
C_i &\rightarrow C_{\ast} := 2^{\frac{n(n+p)}{2p^2}} \left(C(n,p)\right)^{\frac{n+p}{p}} (RC)^{\frac{n}{p}}.
\end{align} Using the expression for $C(n,p)$ we can bound $C_{\ast} \leq C'(n)$, for $C'(n)$ a constant depending on $n$ only. Thus the desired result holds true.
\end{proof}

We also get the following corollary of the above lemma.

\begin{cor}\label{decay-cor}
Under the same conditions as in lemma \ref{infty-decay-rates} we have for any $k\in \N_0$ and multi-index $\alpha \in \N^n_0$
\begin{equation}
\sup_{x\in \R^n}\sup_{t >0} t^{\frac{n}{2p}} | (t^{\frac{1}{2}}\nabla)^{\alpha}(t\partial_t)^k(g-g_{eucl})(x,t)| \leq \norm{h_0}_{L^p(\R^n)} C'(n) R^{|\alpha|+k}(|\alpha| + k)!,
\end{equation}
where $R>0$ is as in (\ref{KL-decay-rates}) and $C'(n)$ is a constant depending on $n$ only.
\end{cor}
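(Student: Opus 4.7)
The plan is to combine the Koch–Lamm derivative bound (\ref{KL-decay-rates}) with the $L^\infty$ decay rate just established in Lemma \ref{infty-decay-rates}, using the semigroup property of the flow by restarting at time $t/2$.

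First, I note that since $\norm{h_0}_{L^\infty(\R^n)}<\epsilon'$ is chosen small enough that the uniform bound $\norm{h_s}_{L^\infty(\R^n)} < \min(\tilde\epsilon,\epsilon)$ holds for all $s \geq 0$ (as in the proof of Lemma \ref{infty-decay-rates}), for any fixed $t>0$ the shifted family $\tilde h_s := h_{s + t/2}$ is again a weak solution of (\ref{RdT-simplified}) in $X_\infty$ with initial data $h_{t/2}$, which lies in the small $L^\infty$ ball needed to apply Koch–Lamm.

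Next, I apply (\ref{KL-decay-rates}) to the restarted flow $\tilde h$ and evaluate at internal time $s = t/2$. This yields, for every multi-index $\alpha$ and every $k \in \N_0$,
\begin{equation*}
\bigl|\bigl((t/2)^{\frac12}\nabla\bigr)^{\alpha}\bigl((t/2)\partial_t\bigr)^k h(x,t)\bigr|
\leq c\,\norm{h_{t/2}}_{L^{\infty}(\R^n)}\,R^{|\alpha|+k}(|\alpha|+k)!.
\end{equation*}
Pulling the factors of $2$ outside, this becomes
\begin{equation*}
\bigl|(t^{\frac12}\nabla)^{\alpha}(t\partial_t)^k h(x,t)\bigr|
\leq c\,2^{\frac{|\alpha|}{2}+k}\,\norm{h_{t/2}}_{L^{\infty}(\R^n)}\,R^{|\alpha|+k}(|\alpha|+k)!.
\end{equation*}

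Finally, I substitute the $L^\infty$ decay rate from Lemma \ref{infty-decay-rates}, which gives
\begin{equation*}
\norm{h_{t/2}}_{L^{\infty}(\R^n)}\leq \frac{C'\,2^{\frac{n}{2p}}}{t^{\frac{n}{2p}}}\norm{h_0}_{L^p(\R^n)}.
\end{equation*}
Multiplying by $t^{n/(2p)}$ and collecting the factors, I obtain
\begin{equation*}
t^{\frac{n}{2p}}\bigl|(t^{\frac12}\nabla)^{\alpha}(t\partial_t)^k h(x,t)\bigr|
\leq c\,C'\,2^{\frac{n}{2p}}\cdot 2^{\frac{|\alpha|}{2}+k}\,R^{|\alpha|+k}(|\alpha|+k)!\,\norm{h_0}_{L^p(\R^n)}.
\end{equation*}
The factor $2^{|\alpha|/2+k}$ is absorbed by replacing $R$ with $2R$ (which is again a constant depending only on $n$), and the prefactor $c\,C'\,2^{n/(2p)}$ is absorbed into a new dimensional constant $C'(n)$. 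This yields the stated bound. There is no real obstacle here beyond bookkeeping of constants; the entire content is the restart-at-$t/2$ trick already employed in the recursive proof of Lemma \ref{infty-decay-rates}, now used only once because the $L^\infty\!$-decay for $h_t$ has become unconditional rather than needing to be bootstrapped.
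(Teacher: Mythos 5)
Your proof is correct and is essentially the paper's own argument: the paper's proof is the one-line instruction to apply (\ref{KL-decay-rates}) restarted at time $t/2$ with the $L^\infty$ bound from Lemma \ref{infty-decay-rates}, which is precisely what you carry out. You are in fact slightly more careful than the paper in tracking the $2^{|\alpha|/2+k}$ factor (absorbed by replacing $R$ with $2R$) and the $2^{n/2p}$ prefactor (bounded by $2^{n/2}$ so the final constant depends only on $n$), details the paper leaves implicit.
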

\begin{proof}
Apply the derivative estimate (\ref{KL-decay-rates}) at time $\frac{t}{2}$ with the bound on $\norm{h_{\frac{t}{2}}}_{L^{\infty}(\R^n)}$ from lemma \ref{infty-decay-rates} above.
\end{proof}

Combining the results from \cite[Theorem 4.3]{KochAndLamm}, theorem \ref{regularity-RdT}, theorem \ref{main-lp-thm} and corollary \ref{decay-cor} above yields theorem \ref{main-thm}.

\section{Positive scalar curvature rigidity}
In this section we prove theorem \ref{scalar-rigidity-thm} by evolving the metric perturbation via Ricci DeTurck flow (\ref{RdT-simplified}). As outlined in the introduction, the proof strategy is to show that positive scalar curvature decays at a rate no faster than $O(t^{-\frac{n}{2}})$ during the flow. This is as one would expect, as the scalar curvature satisfies the super heat equation 
\begin{equation}
\label{R-RdT-eqn}
\partial_t R_{g_t} = \Delta_{g_t} R_{g_t} - \partial_{X(g_t)} R_{g_t} + 2|Ric(g_t)|_{g_t}^2
\end{equation}
and the $L^{\infty}$ decay rate of the standard heat kernel on $\R^n$ is of order $O(t^{-\frac{n}{2}})$. However, the decay rates obtained in corollary \ref{decay-cor} and the coordinate expression (\ref{scalar-curvature-coord}) for $R$ show that the scalar curvature decays at the rate $O(t^{-\frac{n}{2p}-1})$. This proves that no positive scalar curvature can be present during the evolution of metric perturbations bounded in $L^p$ for some $p<\frac{n}{n-2}$. We will show that this implies that the metric perturbation is flat in this case. 

The new ingredient in this section is Perelman's Harnack inequality, which we use to obtain a lower bound for the heat kernel on a Ricci flow background. Because in the non-compact setting Perelman's Harnack inequality requires uniform curvature bounds, which we do not have at $t=0$, we first run Ricci DeTurck flow for a short time $t_0>0$ to obtain uniform curvature bounds. Then we evolve the metric via the Ricci flow from  time $t_0$ onwards and apply Perelman's Harnack inequality. The only technical difficulty is to show that the scalar curvature remains non-negative at early times, because on non-compact manifolds without curvature bounds we cannot directly apply the maximum principle to the super heat equation (\ref{R-RdT-eqn}).

For the rest of this section we will fix an $\epsilon>0$ and $C>0$ such that theorem \ref{main-thm} holds. We will also assume that we are given a smooth metric $g$ that satisfies 
\begin{enumerate}
\item $R_g \geq 0$
\item $\norm{g-g_{eucl}}_{L^{\infty}(\R^n)} < \epsilon$ 
\item $\norm{g-g_{eucl}}_{L^p(\R^n)} < \infty$ for some $p \in [1,\infty)$
\end{enumerate}

\subsection{ Conservation of non-negative scalar curvature} 
Evolving the metric from initial data $g_{0}=g$ via Ricci DeTurck flow, we obtain a family of metrics $(g_t)_{t\geq0}$ solving (\ref{RdT}). By theorem $\ref{RdT-smooth}$ this solution is smooth on $\R^n \times [0,\infty)$ and all derivatives of the solution converge locally to the initial data as $t\rightarrow 0$. In the lemma below we prove that for initial data with $R(g_0)\geq 0$ the scalar curvature $R(g_t)$ remains non-negative for all times $t\geq0$.
\begin{lem}
Let $g$ be as above. Then the scalar curvature remains non-negative in the Ricci DeTurck flow starting from $g$.
\end{lem}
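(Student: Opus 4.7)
The plan is to apply a parabolic maximum principle directly to the super-heat evolution equation (\ref{R-RdT-eqn}) satisfied by $R(g_t)$. Since $\R^n$ is non-compact and the initial metric $g$ is not assumed to have bounded curvature, this requires two preparatory steps: spatial decay of $R(g_t)$ at infinity for each fixed $t>0$, and a barrier that shifts the solution to strict positivity near $t=0$.

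First I would show that $R(g_t)(x) \to 0$ as $|x| \to \infty$ for every $t > 0$. Theorem \ref{main-thm}(ii) preserves $\norm{h_t}_{L^p(\R^n)}$, while Corollary \ref{decay-cor} supplies uniform-in-space pointwise bounds on $\nabla h_t$ and $\nabla^2 h_t$. The combination $h_t \in L^p(\R^n)$ with $\nabla h_t \in L^\infty(\R^n)$ already forces $h_t(x) \to 0$ at spatial infinity by a standard disjoint-balls argument: any sequence $x_k \to \infty$ with $|h_t(x_k)| \geq \delta$ would, via the uniform Lipschitz bound, produce disjoint balls of radius proportional to $\delta/\norm{\nabla h_t}_{L^\infty}$ on which $|h_t| \geq \delta/2$, contradicting $h_t \in L^p$. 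A second order Taylor expansion combined with the uniform bound on $\nabla^2 h_t$ then upgrades this to $\nabla h_t(x) \to 0$, and one further iteration gives $\nabla^2 h_t(x) \to 0$. Feeding this into the coordinate expression (\ref{scalar-curvature-coord}) yields the desired spatial decay of $R(g_t)$.

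Next I would run the barrier argument. Set $u = R(g_t) + \epsilon(1+t)$ for small $\epsilon>0$. By Theorem \ref{RdT-smooth}, $u$ is smooth on $\R^n \times [0,\infty)$; by hypothesis $u(\cdot, 0) \geq \epsilon$; and by the previous step $u(x,t) \to \epsilon(1+t) > 0$ as $|x| \to \infty$ for each $t>0$. From (\ref{R-RdT-eqn}),
\begin{equation*}
(\partial_t - \Delta_{g_t} + \partial_{X(g_t)})\, u \;=\; 2 |Ric(g_t)|^2 + \epsilon \;\geq\; \epsilon \;>\; 0.
\end{equation*}
If $R(x_0, t_0) < 0$ for some $t_0 > 0$, then choosing $\epsilon$ small enough gives $u(x_0, t_0) < 0$; set $T = \sup\{t \in [0, t_0]: u \geq 0 \text{ on } \R^n \times [0,t]\}$. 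Provided $T > 0$, continuity and the decay at infinity force the zero infimum of $u(\cdot, T)$ to be attained at some interior $x_* \in \R^n$, yielding $\partial_t u(x_*, T) \leq 0$, $\nabla u(x_*, T) = 0$, and $\Delta u(x_*, T) \geq 0$ simultaneously, in direct conflict with the strict inequality above.

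I expect the main obstacle to be ruling out $T = 0$, that is, instantaneous loss of non-negativity. The local $C^\infty$ continuity of $R$ across $\{t = 0\}$ from Theorem \ref{RdT-smooth} does not immediately control $R(\cdot, t) - R(\cdot, 0)$ uniformly at spatial infinity, and for smooth but only $L^\infty$-small initial data $\nabla h_0$ and $\nabla^2 h_0$ need not lie in $L^\infty$, so the spatial decay argument for $R(g_t)$ cannot be carried down to $t = 0$ uniformly. To close this gap I would show that the potential set where $u(\cdot, t) < 0$ must lie in a spatially uniformly bounded region for $t \in [0,\tau]$ with $\tau$ small---using the preserved $L^p$ bound of $h_t$ together with interpolation against the smoothing of derivatives from Corollary \ref{decay-cor}---and then appeal to local uniform continuity of $R$ up to $t = 0$ on this compact region. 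Conjugating to the Ricci flow via the diffeomorphism $\Phi_t$ of section \ref{RicciFlow} removes the drift term $\partial_{X(g_t)} R$ and simplifies the evolution inequality, but it does not circumvent this initial-time subtlety.
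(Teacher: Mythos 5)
Your plan is a genuinely different route from the paper's, and it has a genuine gap precisely where you flag one: the initial time $T=0$ cannot be ruled out by the tools you invoke. The spatial decay of $R(g_t)$ at infinity that you establish for each fixed $t>0$ uses Corollary~\ref{decay-cor} (or (\ref{KL-decay-rates})), whose bounds on $\nabla h_t$, $\nabla^2 h_t$, \dots carry prefactors $t^{-|\alpha|/2}$ that blow up as $t\to 0$. Consequently the radius $M(t)$ beyond which $|R(g_t)|<\epsilon$ can (and a priori will) escape to infinity as $t\downarrow 0$. The proposed fix---showing the potential negativity set of $u$ stays in a fixed compact region for small $t$ by ``interpolation against the smoothing of derivatives''---is circular, since every quantitative interpolation you have access to degenerates at $t=0$; and the local smoothness up to $t=0$ from Theorem~\ref{RdT-smooth} is not spatially uniform and therefore cannot substitute. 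This is not a minor loose end: the global barrier/maximum-principle strategy on a non-compact manifold really does require some uniform control near $t=0$, which is exactly what is unavailable in this setting (the initial metric has no a priori curvature bounds).

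The paper sidesteps the issue entirely by invoking a \emph{local} lower bound on scalar curvature under Ricci DeTurck flow, namely Bamler's Lemma~4 in \cite{RB2}: if $R(g_0)>a$ on $B(o,r)$ then $R(o,t)>a-\delta/r^2$ for $t\in[0,\tau(\delta)r^2]$. This estimate is proved with spatial cutoffs and parabolic rescaling; it needs neither decay of $R$ at infinity nor a global comparison function, only that the solution and its first two derivatives are continuous down to $t=0$ locally, which Theorem~\ref{RdT-smooth} supplies (replacing the compact-support hypothesis in Bamler's original statement). Taking $a<0$ arbitrary and $r\to\infty$ then gives $R(g_t)\geq 0$ pointwise, with no global information about $R$ needed and no first-touching-time argument. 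If you want to repair your write-up, the cleanest path is to replace your step involving ``$T=0$'' with a localized estimate of this type rather than trying to force the global maximum principle through.
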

\begin{proof}
We can apply \cite[Lemma 4]{RB2}, because the assumption that $g-g_{eucl}$ is compactly supported was only used to prove that the first and second derivatives of $g_t$ vary continuously as $t\rightarrow0$. By parabolic dilations, \cite[Lemma 4]{RB2} implies that for every $\delta>0$ there is a $\tau = \tau(\delta)$ such that the following is true: Assume that $R(g_{0}) > a$ on $B(o,r)$ for some $a\in\R$. Then the solution $(g_t)_{t\geq0}$ to the Ricci DeTurck flow equation (\ref{RdT-simplified}) satisfies $R(o,t) > a - \frac{\delta}{r^2}$ for all $t \in [0, \tau r^2]$. Since we can take any $a<0$ and $r>0$ the result follows. 
\end{proof}

\begin{remark}
We did not use the condition $\norm{g-g_{eucl}}_{L^p(\R^n)} < \infty$ in the proof of the lemma above.
\end{remark}
\begin{remark}
One can show using \cite[Lemma 4]{RB2} that there are constants $\epsilon>0$ and $C>0$ such that no metric $g$ on $B_1(0) \subset \R^n$ exists that satisfies $\norm{g-g_{eucl}}_{L^{\infty}(B_1(0))} < \epsilon$ and $R_g >C$ everywhere on $B_1(0)$.
\end{remark}

\subsection{Lower heat kernel bounds on Ricci flow background} Fix $t_0>0$ and let 
\begin{equation}
\tilde{g}_{t_0 + t} = (\Phi_{t_0+t})^{\ast} g_{t_0+t}, \quad t\geq0,
\end{equation}
be the corresponding Ricci flow starting from $g_{t_0}$ (see section \ref{RicciFlow}). Because $(\tilde{g}_{t_0+t})_{t\geq0}$ and $(g_{t_0 + t})_{t\geq0}$ are related by diffeomorphisms, we obtain by corollary \ref{decay-cor} the uniform curvature bounds 
\begin{equation}
\label{curv-decay}
|R_{\tilde{g}_t}|_{\tilde{g}_t}, \, |Rc_{\tilde{g}_t}|_{\tilde{g}_t},\, |Rm_{\tilde{g}_t}|_{\tilde{g}_t} < \frac{C_2 \norm{g-g_{eucl}}_{L^p(\R^n)} }{(t_0+t)^{1+\lambda}},
\end{equation}
where $\lambda = \frac{n}{2p}$ and $C_2$ is a constant depending on $C$ and $n$ only. By the Ricci flow equation we have
\begin{equation}
\partial_t \tilde{g}_t(\xi,\xi) = -2 Ric_{\tilde{g}_t}(\xi,\xi)
\end{equation}
for any fixed $\xi \in T\R^n$. Therefore we obtain
\begin{equation}
|\partial_t \tilde{g}_t(\xi,\xi)| \leq 2 |Ric_{\tilde{g}_t}|_{\tilde{g}_t} \tilde{g}_t(\xi,\xi) \leq \frac{2 C_2 \norm{g-g_{eucl}}_{L^p(\R^n)} }{(t_0+t)^{1+\lambda}}\tilde{g}_t(\xi,\xi).
\end{equation}
Integrating this bound allows us to estimate $\tilde{g}_{t_0 + t}$ in terms of $\tilde{g}_{t_0}$
\begin{equation}
\label{bilip}
e^{-C_3} \tilde{g}_{t_0} \leq \tilde{g}_{t_0 +t} \leq e^{C_3} \tilde{g}_{t_0}, \quad t\geq0,
\end{equation} 
where 
\begin{equation}
C_3 = \frac{2C_2 \norm{g-g_{eucl}}_{L^p(\R^n)}}{\lambda t_0^{\lambda}}.
\end{equation}
Furthermore note that from the bound $\norm{h_t}_{L^{\infty}(\R^n)} \leq C \norm{h_0}_{L^{\infty}(\R^n)}$ one obtains by Cauchy-Schwarz
\begin{equation}
\norm{\xi}_{\tilde{g}_{t_0}}^2 \leq (1+C n \epsilon) \norm{\xi}_{g_{eucl}}^2.
\end{equation}

On the Ricci flow background the scalar curvature satisfies a super heat equation related to (\ref{R-RdT-eqn})
\begin{equation}
\label{R-Ricci-eqn}
\partial_t R_{\tilde{g}_{t_0+t}} = \Delta_{\tilde{g}_{t_0+t}} R_{\tilde{g}_{t_0+t}} + 2 |Ric(\tilde{g}_{t_0+t})|^2_{\tilde{g}_{t_0+t}}, \quad t\geq 0.
\end{equation}
In order to analyse the decay rate of $R_{\tilde{g}_{t_0+t}}$ we need a lower bound on the heat kernel $K_{\tilde{g}}(x,t;y,s)$ of the linear heat equation 
\begin{equation}
\label{evolving-heat-eqn}
\partial_t u = \Delta_{\tilde{g}_{t_0+t}} u, \qquad t\geq 0,
\end{equation}
on the Ricci flow background $(\tilde{g}_{t_0+t})_{t\geq0}$. For this we rely on Perelman's Harnack inequality (see \cite[Section 9]{Perelmann}), the basic setup of which we will recap here: Let $x,y\in \R^n$ and $0<s<t<T$, then the $\Lcurv$-length of a curve $\gamma: [s,t] \rightarrow \R^n$ is defined as
\begin{equation}
\Lcurv(\gamma) := \int_s^t \sqrt{t-t'}\Big(R(\gamma(t'),t') + |\dot{\gamma}(t')|^2_{t'}\Big) \, \mathrm{d}t'
\end{equation}
and the reduced distance between $(x,t)$ and $(y,s)$ is defined as 
\begin{equation}
l_{(x,t)}(y,s) := \frac{1}{2\sqrt{t-s}} \inf \{\Lcurv(\gamma) |\; \gamma: [s,t] \rightarrow \R^n \; \text{between $(x,t)$ and $(y,s)$} \}.
\end{equation}
On a closed manifold we can use the reduced distance to find a lower bound of the heat kernel (see \cite[Lemma 16.49]{ChowII}). Below we show that this carries over to our non-compact setting.
\begin{lem} In the same notation as above we have for $t>s\geq0$
\begin{equation}
	\label{RF-heat-kernel}
	K_{\tilde{g}}(x,t,y,s) \geq \frac{1}{(4\pi(t-s))^{\frac{n}{2}}}\exp\left(-l_{(x,t)}(y,s)\right).
\end{equation}
\end{lem}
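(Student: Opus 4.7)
The plan is to adapt the classical compact-manifold derivation of heat kernel lower bounds from Perelman's reduced distance, as in \cite[Lemma 16.49]{ChowII}, to our non-compact setting by using the uniform curvature bounds (\ref{curv-decay}) and the bi-Lipschitz equivalence (\ref{bilip}) to justify the required analytic manipulations. Fix $(x,t)$ and set
\begin{equation*}
v(y,s) := (4\pi(t-s))^{-n/2}\exp\bigl(-l_{(x,t)}(y,s)\bigr), \qquad s \in [0,t).
\end{equation*}
The first ingredient is Perelman's celebrated barrier inequality for the reduced distance, namely
\begin{equation*}
\square^{\ast} v := \bigl(-\partial_s - \Delta_{\tilde{g}_{t_0+s}} + R_{\tilde{g}_{t_0+s}}\bigr) v \leq 0
\end{equation*}
in the barrier sense on $\R^n\times[0,t)$. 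This is a purely local computation relying only on the Ricci flow equation and smoothness of $\tilde{g}$, so it carries over verbatim from the compact case.

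The second step is to pair $v$ against the forward heat equation. For an arbitrary non-negative $\phi\in C^{\infty}_c(\R^n)$, let $w(y,s)$, $s\in[s_0,t)$, solve $\partial_s w = \Delta_{\tilde{g}_{t_0+s}}w$ with $w(\cdot,s_0)=\phi$. The heat kernel $K_{\tilde{g}}(x,t;\cdot,\cdot)$ satisfies $\square^{\ast}K_{\tilde{g}}=0$ as a function of $(y,s)$, and $\partial_s\, d\mu_{\tilde{g}_{t_0+s}} = -R_{\tilde{g}_{t_0+s}}\, d\mu_{\tilde{g}_{t_0+s}}$. A direct integration by parts yields
\begin{equation*}
\frac{d}{ds}\int_{\R^n} v\cdot w\, d\mu_{\tilde{g}_{t_0+s}} = -\int_{\R^n} w\cdot\square^{\ast}v\, d\mu_{\tilde{g}_{t_0+s}} \geq 0,
\end{equation*}
so the pairing is non-decreasing on $[s_0,t)$. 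Sending $s\uparrow t$, the fact that $(4\pi(t-s))^{-n/2}e^{-l}\to\delta_x$ in the sense of distributions combined with the reproducing formula $w(x,t)=\int K_{\tilde{g}}(x,t;y,s_0)\phi(y)\,d\mu(y)$ yields
\begin{equation*}
\int_{\R^n} v(y,s_0)\phi(y)\, d\mu(y) \leq \int_{\R^n} K_{\tilde{g}}(x,t;y,s_0)\phi(y)\, d\mu(y),
\end{equation*}
and the arbitrariness of $\phi\geq 0$ gives the desired pointwise inequality at $s=s_0$.

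The main obstacle is to justify these manipulations rigorously on the non-compact domain: the integration by parts generates boundary terms at spatial infinity, and the limit $s\uparrow t$ requires control on the delta-concentration of $v$. This is where the bounds developed in this section are used: (\ref{curv-decay}) gives uniform curvature control of $\tilde{g}_{t_0+s}$ on $[s_0,t]$, (\ref{bilip}) gives uniform equivalence $\tilde{g}_{t_0+s}\sim\tilde{g}_{t_0}$, and the $L^\infty$-smallness of $h$ then implies $\tilde{g}_{t_0+s}$ is uniformly equivalent to $g_{eucl}$ with bounded geometry. Consequently $w$ inherits Gaussian-type decay at spatial infinity from standard parabolic theory, and $v$ admits a Gaussian upper bound via the Euclidean-type quadratic lower bound on $l$ (cf.\ \cite[\S 7]{Perelmann}). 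Performing the computation on $B_R(0)\times[s_0,t-\delta]$ with the cut-off $\eta_R$ and then sending $R\to\infty$ and $\delta\to 0$ makes all boundary and cut-off contributions vanish, producing the asserted bound.
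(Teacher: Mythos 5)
Your argument is correct and reconstructs in detail the same mechanism the paper obtains by citing \cite[Lemmas 16.48--16.49]{ChowII}: the barrier inequality $\square^{\ast}v\leq 0$ for $v=(4\pi(t-s))^{-n/2}e^{-l}$ together with a maximum-principle-type pairing against solutions of the forward heat equation, with the non-compactness controlled by the curvature decay (\ref{curv-decay}) and bi-Lipschitz bounds (\ref{bilip}). One imprecision worth noting: the barrier inequality is not ``a purely local computation'' that ``carries over verbatim'' --- it rests on Perelman's comparison geometry for $\Lcurv$ (existence of minimizing $\Lcurv$-geodesics, local Lipschitz regularity of $l$, second variation), and the paper is careful to point out that these survive in the non-compact setting precisely because of the uniform curvature bounds (\ref{curv-decay}), citing \cite[pp.~322]{ChowI}, rather than automatically.
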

\begin{proof}
 Because of the uniform curvature bounds (\ref{curv-decay}) for $(\tilde{g}_{t_0+t})_{t\geq0}$ on $\R^n \times [0, \infty)$, Perelman's comparison geometry for $\Lcurv$ still holds true \cite[pp.322]{ChowI}. The curvature bounds in conjunction with the bilipshitz bounds (\ref{bilip}) ensure that the weak maximum principle for the heat equation on $(\R^n, (\tilde{g}_{t_0+t})_{t\geq0})$ holds true by \cite[pp. 140]{ChowII}. Therefore the proofs of \cite[Lemma 16.48]{ChowII}  and \cite[Lemma 16.49]{ChowII} carry over to our non-compact setting.
\end{proof}
The reduced distance $l_{(x,t)}(y,s)$ can be bounded by
\begin{align}
l_{(x,t)}(y,s) &\leq \frac{1}{2\sqrt{t-s}} \int_s^t \sqrt{t-t'} \left(\frac{C_2 \norm{g-g_{eucl}}_{L^p(\R^n)} }{(t_0+t')^{1+\lambda}} + e^{C_3}(1+C\epsilon n)|\dot{\gamma}(t')|^2_{g_{eucl}} \right) \, \mathrm{d}t'  \\ \nonumber
\label{integral}				
&\leq\norm{g-g_{eucl}}_{L^p(\R^n)}  \frac{C_2}{2}\int_s^t  \frac{1}{(t_0+t')^{1+\lambda}}\, \mathrm{d}t' + \frac{e^{C_3}}{3}(1+C\epsilon n) \frac{|x-y|^2}{t-s}\\ \nonumber
&\leq  \frac{C_2}{2\lambda} \frac{\norm{g-g_{eucl}}_{L^p(\R^n)}}{(t_0+s)^{\lambda}} + \frac{e^{C_3}}{3}(1+C\epsilon n) \frac{|x-y|^2}{t-s},
\end{align}
where we took $\gamma$ to be the Euclidean geodesic with constant velocity connecting $x$ and $y$.

We may summarize the above in the following lemma:
\begin{lem}
\label{RF-heat-kernel-bound} In the notation from above, let $(g_t)_{t\geq 0}$ be a solution to the Ricci DeTurck equation (\ref{RdT}) starting from $g_0 = g$ and let $(\tilde{g}_{t_0+t})_{t\geq0}$ be the corresponding Ricci flow starting from $g_{t_0}$. Then for $t > s \geq 0 $ we have
 \begin{equation}
 \label{HK}
K_{\tilde{g}}(x,t,y,s) \geq \frac{C_4}{\left(4\pi(t-s)\right)^{\frac{n}{2}}}\exp\left(- C_5 \frac{|x-y|^2}{t-s}\right),
 \end{equation}
 where 
\begin{equation}
C_4 = \exp\left(-\frac{C_3}{4}\right)
\end{equation}
and
\begin{equation}
C_5 = \frac{e^{C_3}}{3}(1+C\epsilon n).
\end{equation}
\end{lem}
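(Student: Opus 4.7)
The proof will consist essentially of assembling the two ingredients that immediately precede the statement: the heat kernel lower bound in terms of the reduced distance, and the explicit upper bound on the reduced distance obtained by testing against a Euclidean geodesic.

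First, I would start from inequality (\ref{RF-heat-kernel}),
\begin{equation*}
K_{\tilde{g}}(x,t,y,s) \geq \frac{1}{(4\pi(t-s))^{n/2}} \exp\bigl(-l_{(x,t)}(y,s)\bigr),
\end{equation*}
so the whole task reduces to upper-bounding $l_{(x,t)}(y,s)$ by $\tfrac{C_3}{4} + C_5 \tfrac{|x-y|^2}{t-s}$.

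Next, I would evaluate the already-derived inequality
\begin{equation*}
l_{(x,t)}(y,s) \leq \frac{C_2}{2\lambda}\frac{\norm{g-g_{eucl}}_{L^p(\R^n)}}{(t_0+s)^{\lambda}} + \frac{e^{C_3}}{3}(1+C\epsilon n)\frac{|x-y|^2}{t-s},
\end{equation*}
obtained by plugging the curvature decay (\ref{curv-decay}) and the bilipschitz bounds (\ref{bilip}) into the $\Lcurv$-length of the Euclidean straight line between $x$ and $y$ traversed at constant speed. Since $s\geq 0$, the denominator satisfies $(t_0+s)^{\lambda} \geq t_0^{\lambda}$, so the first term is bounded by
\begin{equation*}
\frac{C_2}{2\lambda}\frac{\norm{g-g_{eucl}}_{L^p(\R^n)}}{t_0^{\lambda}} = \frac{1}{4}\cdot\frac{2C_2\norm{g-g_{eucl}}_{L^p(\R^n)}}{\lambda t_0^{\lambda}} = \frac{C_3}{4},
\end{equation*}
by the very definition of $C_3$. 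The second term is exactly $C_5 |x-y|^2/(t-s)$.

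Finally, I would substitute this into the exponential and factor, obtaining
\begin{equation*}
K_{\tilde{g}}(x,t,y,s) \geq \frac{e^{-C_3/4}}{(4\pi(t-s))^{n/2}} \exp\!\left(-C_5\,\frac{|x-y|^2}{t-s}\right) = \frac{C_4}{(4\pi(t-s))^{n/2}} \exp\!\left(-C_5\,\frac{|x-y|^2}{t-s}\right),
\end{equation*}
which is (\ref{HK}). There is no real obstacle here: all the analytic work (establishing (\ref{RF-heat-kernel}) via Perelman's $\Lcurv$-geometry on the non-compact background, the curvature decay, and the bilipschitz control) has already been carried out earlier; this lemma is purely a matter of choosing the test curve, executing the elementary integral $\int_s^t (t_0+t')^{-1-\lambda}\,\mathrm{d}t'$, and recognizing the resulting constant as $C_3/4$.
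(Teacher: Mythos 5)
Your proof is correct and is exactly what the paper intends by its one-line proof ``See above'': you combine the heat kernel lower bound (\ref{RF-heat-kernel}) with the reduced-distance estimate, observe $(t_0+s)^\lambda \geq t_0^\lambda$ to identify the first term as $C_3/4$, and read off $C_4$ and $C_5$.
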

\begin{proof}
See above.
\end{proof}

\subsection{Proof of positive scalar curvature rigidity}
Now we proceed to prove the main theorem of this section.

\begin{proof}[Proof of theorem \ref{scalar-rigidity-thm}]
Let $(g_t)_{t\geq 0}$ be the Ricci DeTurck flow starting from $g_0 = g$ as above. Assume that at some point $(x_0,t_0)\in R^n \times (0,\infty)$ we have $R_0:=R(g_{t_0})(x_0) > 0$. By translation we may assume without loss of generality that $x_0 = o$. Then consider the Ricci flow $(\tilde{g}_{t_0+t})_{t\geq0}$ starting from the initial metric $g_{t_0}$. Pick $\delta>0$ such that $R(\tilde{g}_{t_0})\geq \frac{R_0}{2}$ on $B_o(\delta)$. Then from (\ref{R-Ricci-eqn}) and lemma \ref{RF-heat-kernel-bound} we have for $t \geq 1$
\begin{align}
R(\tilde{g}_{t_0+t})(o) &\geq \int_{\R^n} K_{\tilde{g}}(0,t,y,0) R(\tilde{g}_{t_0})(y) \, \mathrm{d}V_{\tilde{g}_{t_0}} \\ \nonumber
				  &\geq \frac{R_0}{2} \int_{B_o(\delta)} K_{\tilde{g}}(0,t,y,0) \, \mathrm{d}V_{\tilde{g}_{t_0}} \\ \nonumber
				  &\geq \frac{R_0}{2} \int_{B_o(\delta)}  \frac{C_4}{\left(4\pi t\right)^{\frac{n}{2}}}\exp\left(-C_5\frac{|y|^2}{t}\right) \, \mathrm{d}V_{\tilde{g}_{t_0}} \\ \nonumber
				  &\geq \frac{C_6}{t^{\frac{n}{2}}},
\end{align}
where $C_6>0$ is a constant independent of $t$. For $1 \leq p < \frac{n}{n-2}$ this contradicts the curvature decay rates (\ref{curv-decay}) and we deduce that $R(g_t) = 0$ for all $t>0$. Therefore $|Ric(g_t)| = 0$ for $t\geq0$ by equation (\ref{R-Ricci-eqn}). Because Ricci DeTurck flow is related to Ricci flow by a family of diffeomorphisms and Ricci flow is stationary for $Ric = 0$, we deduce by the curvature decay rates (\ref{curv-decay}) that our initial metric $g$ must have been flat. 
\end{proof}

In the borderline case case $p = \frac{n}{n-2}$, $n\geq3$ we can prove that the scalar curvature $R(g_t)$ becomes bounded in $L^1$ for any $t>0$:

\begin{lem}
\label{borderline}
Let $g$ be a smooth metric satisfying the conditions of theorem \ref{scalar-rigidity-thm} in the case $p = \frac{n}{n-2}$. Let $(g_t)_{t\geq0}$ be a solution to the Ricci DeTurck flow (\ref{RdT-simplified}) starting from the initial data $g_0 = g$. Then for $t_0>0$
\begin{equation}
\int_{\R^n} R(g_{t_0})(x) \, \mathrm{d}x < C_7 \norm{g-g_{eucl}}_{L^p(\R^n)} \exp\left( C_7 \frac{\norm{g-g_{eucl}}_{L^p(\R^n)}}{t_0^{\frac{n}{2}-1}}\right),
\end{equation}
where $C_7>0$ is a constant depending on $C$ and $n$ only.
\end{lem}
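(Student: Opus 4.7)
The plan is to exploit the divergence structure of the scalar curvature's linearization at the Euclidean metric, reducing the problem to controlling $\int_{\R^n} |\nabla h_{t_0}|^2\,dx$, and then to bound this quantity via Gronwall's inequality using the pointwise decay rates from corollary~\ref{decay-cor} and the $L^p$-conservation from theorem~\ref{main-thm}~(ii).

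By theorem~\ref{RdT-smooth}, I first extend $g_0 = g$ to a smooth Ricci DeTurck flow $(g_t)_{t\geq0}$, so that $h_t := g_t - g_{eucl}$ satisfies the estimates of theorem~\ref{main-thm} and corollary~\ref{decay-cor}. Expanding the scalar curvature about $g_{eucl}$ yields
\[
R(g_t) = \partial_i\bigl(\partial_j (h_t)_{ij} - \partial_i \operatorname{tr}(h_t)\bigr) + E_t,\qquad |E_t|\le C\bigl(|h_t|\,|\nabla^2 h_t| + |\nabla h_t|^2\bigr).
\]
Multiplying by a cutoff $\eta_{R_0}$ and integrating by parts \emph{twice} on the linear divergence term transfers both derivatives onto $\eta_{R_0}$, producing a boundary contribution bounded by $CR_0^{-2}\int_{B_{2R_0}\setminus B_{R_0}} |h_t|\,dx$. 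At the critical exponent $p = \tfrac{n}{n-2}$, Hölder with dual exponent $p'=n/2$ gives an annular-volume factor $R_0^{n/p'} = R_0^2$ that exactly cancels $R_0^{-2}$, leaving $C\|h_t\|_{L^p(\R^n\setminus B_{R_0})}\to 0$ as $R_0\to\infty$ by the $L^p$-conservation. A further integration by parts on the $|h_t|\,|\nabla^2 h_t|$ part of $E_t$ similarly reduces it to a $|\nabla h_t|^2$ term, giving
\[
\int_{\R^n} R(g_t)\,dx \le C\,A(t),\qquad A(t):=\int_{\R^n}|\nabla h_t|^2\,dx,\quad t>0.
\]

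Applying $(\partial_t - \Delta)$ to $|\nabla h_t|^2$ via Bochner and the equation~(\ref{RdT-simplified}), integrating by parts once, and estimating the cubic terms by Cauchy--Schwarz so that the $C\|h_t\|_{L^\infty}\|\nabla^2 h_t\|_{L^2}^2$ contribution is absorbed into the nonnegative Bochner term $-2\int|\nabla^2 h_t|^2$ (valid since $\|h_t\|_{L^\infty}<C\epsilon$ is small), I would derive
\[
\frac{dA}{dt} \le C\,\|\Delta h_t\|_{L^\infty(\R^n)}\,A(t).
\]
Since $\|\Delta h_t\|_{L^\infty} \le C\|h_0\|_{L^p}/t^{n/2}$ by corollary~\ref{decay-cor}, Gronwall on $[t_0/2,t_0]$ yields $A(t_0) \le A(t_0/2)\exp\bigl(C\|h_0\|_{L^p}/t_0^{(n-2)/2}\bigr)$. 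The base value $A(t_0/2)$ is controlled via the identity $A = -\int h_t\cdot\Delta h_t \le \|h_t\|_{L^p}\|\Delta h_t\|_{L^{n/2}}$ combined with an interpolation of $\|\Delta h_t\|_{L^{n/2}}$ between an $L^p$-smoothing estimate for $\nabla^2 h_t$ (the Ricci DeTurck analog of heat-semigroup smoothing, provable by Gronwall on $\|\nabla^2 h_t\|_{L^p}$) and the pointwise $L^\infty$-decay, giving $A(t_0/2) \le C\|h_0\|_{L^p}^2/t_0^{(n-2)/2}$. Absorbing the extra factor $\|h_0\|_{L^p}/t_0^{(n-2)/2}$ into the exponential then produces the stated bound.

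The main obstacle will be the careful derivation of the differential inequality for $A$: the nonlinearity $\nabla Q_1[h_t]$ in~(\ref{RdT-simplified}) contains the divergence of a term of order $\nabla h_t$, so its contribution to $A'(t)$ must be controlled via the pointwise decomposition $|\nabla Q_1|\le C(|\nabla h_t|^2 + |h_t|\,|\nabla^2 h_t|)$, with the resulting $\|h_t\|_{L^\infty}\|\nabla^2 h_t\|_{L^2}^2$ term exactly absorbed by the Bochner contribution. A secondary technical step is establishing the $L^p$-smoothing estimate $\|\nabla^2 h_t\|_{L^p}\lesssim t^{-1}\|h_0\|_{L^p}$ needed for the base estimate, which should follow from arguments analogous to those in section~5.
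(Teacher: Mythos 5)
Your approach is genuinely different from the paper's. The paper bounds $\int R(g_{t_0})\,dx$ by combining a \emph{lower} heat-kernel bound on the Ricci flow background---derived from Perelman's $\mathcal{L}$-length comparison and reduced-distance machinery (lemma~\ref{RF-heat-kernel-bound})---with the \emph{upper} decay rate $|R(g_{t_0+t})| \lesssim \norm{g-g_{eucl}}_{L^p}(t_0+t)^{-n/2}$ from corollary~\ref{decay-cor}. At the critical exponent the two $t^{-n/2}$ factors cancel, and letting $t\to\infty$ with monotone convergence (using $R\ge 0$) gives the bound; the exponential factor in $t_0$ enters through the constant $C_4 = e^{-C_3/4}$ in the heat-kernel bound, where $C_3 \propto \norm{g-g_{eucl}}_{L^p}/t_0^{n/2-1}$. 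Your proposal instead uses the divergence structure of scalar curvature and a Gr\"onwall argument for the Dirichlet energy $A(t)=\int|\nabla h_t|^2$, avoiding Perelman's Harnack inequality entirely---which, if it could be made to work, would be a more elementary and self-contained route.

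However, there are genuine gaps. First, the annular boundary terms in the integration by parts for the nonlinear $h\ast\nabla^2 h$ part of $R(g_t)$ are not controlled. Integrating by parts once produces a term $\int \nabla\eta_{R_0}\ast h_t \ast \nabla h_t$, which carries only a single factor $R_0^{-1}$ rather than the $R_0^{-2}$ of the linear piece. Estimating it as you suggest gives $R_0^{-1}\norm{\nabla h_t}_{L^\infty}\int_{B_{2R_0}\setminus B_{R_0}}|h_t|\,dx \lesssim R_0\norm{h_t}_{L^p(B_{2R_0}\setminus B_{R_0})}$, and the product of $R_0\to\infty$ with a quantity tending to zero need not be bounded, let alone vanish; the alternative H\"older pairing with exponent $p'=n/2$ would require $\nabla h_t\in L^{n/2}$, which the paper does not establish. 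Second, the finiteness of $A(t_0/2)=\int|\nabla h_{t_0/2}|^2$ is only proved in the paper for $1\le p\le 2$ (corollary~\ref{gradient-h-l2-bound}), whereas $p=n/(n-2)$ exceeds $2$ when $n=3$, so even the base quantity in your Gr\"onwall iteration is not available from the paper's results. Third, the ``$L^p$-smoothing estimate $\norm{\nabla^2 h_t}_{L^p}\lesssim t^{-1}\norm{h_0}_{L^p}$'' you invoke to bound $A(t_0/2)$ is nowhere proved in the paper---section~5 establishes $L^p$ \emph{conservation} of $h_t$ itself and $L^\infty$ decay of derivatives, but not $L^p$ bounds on $\nabla^2 h_t$; this is a substantial extension, not a ``secondary technical step.'' Taken together these gaps mean the argument does not close as written, even though the differential inequality $A'(t)\le C\norm{\Delta h_t}_{L^\infty}A(t)$ and the exponent arithmetic at $p=n/(n-2)$ are both sound.
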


\begin{proof}
As before we have
\begin{equation}
R(g_{t_0+t})(o) \geq \int_{\R^n} \frac{C_4}{(4\pi t)^{\frac{n}{2}}} \exp\left(-C_5 \frac{|x|^2}{t}\right) R(g_{t_0})(x) \, \mathrm{d}V_{g_{t_0}}.
\end{equation}
Recalling our curvature estimate 
\begin{equation}
R(g_{t_0+t})(o) < \frac{C_2\norm{g-g_{eucl}}_{L^p(\R^n)}}{(t_0 + t)^{1 + \lambda}}
\end{equation} 
gives
\begin{equation}
\label{ineq}
\frac{C_2}{C_4} (4\pi)^{\frac{n}{2}} \norm{g-g_{eucl}}_{L^p(\R^n)} > \int_{\R^n}   \exp\left(-C_5 \frac{|x|^2}{t}\right) R(g_{t_0})(x) \, \mathrm{d}V_{g_{t_0}},
\end{equation}
since $1 + \lambda = 1 + \frac{n}{2p} = \frac{n}{2}$ in the borderline case $p = \frac{n}{n-2}$. 
By (\ref{KL-decay-rates}) we have
\begin{equation}
\norm{g_{t_0}- g_{eucl}}_{L^{\infty}(\R^n)} < c \epsilon,
\end{equation} 
so for $\epsilon>0$ suitably small the volume elements $\mathrm{d}V_{g_{eucl}}$ and $\mathrm{d}V_{g_{t_0}}$ are comparable. The desired result follows by taking the limit $t\rightarrow \infty$ in (\ref{ineq}) and applying the monotone convergence theorem.
\end{proof}

\section*{Appendix}
\begin{gronwall}
Let $u$ and $f$ be continuous and non-negative functions defined on $I=[\alpha, \beta]$, and let $n(t)$ be a continous, positive, nondecreasing function defined on $I$; then
\begin{equation}
u(t)\leq n(t) + \int_{\alpha}^t f(s)u(s) \, \mathrm{d}s , \quad  t \in I,
\end{equation}
implies that 
\begin{equation}
u(t)\leq n(t)\exp\left(\int_{\alpha}^t f(s) \, \mathrm{d}s\right) , \quad t \in I.
\end{equation}
\end{gronwall}
\begin{proof}
See \cite{Gronwall}, theorem 1.3.1.
\end{proof}

\section*{Acknowledgments}
The author would like to thank his advisor Prof. Richard Bamler for suggesting this project and giving very helpful advice along the way. We also thank the referee for helpful comments.

\end{document}